\definecolor{untgreen}{RGB}{5,144,51} 
\numberwithin{equation}{subsection} 
\crefname{equation}{}{} 
\crefname{subsection}{}{} 
\crefname{section}{\S\kern -.5 ex}{\S\S\kern -1 ex}
\theoremstyle{plain}
\newtheorem{lemma}[subsection]{Lemma}
\newtheorem{proposition}[subsection]{Proposition}
\newtheorem{theorem}[subsection]{Theorem} 
\newtheorem*{theorem*}{Theorem}
\newtheorem{corollary}[subsection]{Corollary} 
\theoremstyle{definition}%
\newcommand\CC{{\mathcal C}}
\newcommand\CD{{\mathcal D}}
\newcommand\CH{{\mathcal H}}
\newcommand\CI{{\mathcal I}}
\newcommand\CO{{\mathcal O}}
\newcommand\CP{{\mathcal P}}
\newcommand\CS{{\mathcal S}}
\newcommand\CT{{\mathcal T}}
\newcommand\BBC{{\mathbb C}}
\newcommand\BBF{{\mathbb F}}
\newcommand\BBZ{{\mathbb Z}}
\newcommand\ba{\mathbf a}
\newcommand\bq{\mathbf q}
\newcommand\ibar{\overline{\imath}}
\newcommand\jbar{\overline{\jmath}}
\newcommand\mbar{\overline{m}}
\newcommand\Shat{\widehat{S}}
\newcommand\Atilde{\widetilde{A}}
\newcommand\Gtilde{\widetilde{G}}
\newcommand\Rtilde{\widetilde R}
\newcommand\Stilde{\widetilde{S}}
\newcommand\vtilde{\tilde{v}}
\DeclareMathOperator{\GL}{GL}
\DeclareMathOperator{\Hom}{Hom} 
\DeclareMathOperator{\Ind}{Ind}
\newcommand\id{\operatorname{id}} 
\newcommand\inverse{^{-1}}
\renewcommand\th{{^{\text{th}}}}
\newcommand\compnb[2]{\CC(#1,#2)}
\newcommand\bpart[2]{#1\text{-}\kern-.1em\CP(#2)}
\newcommand\CHt{\widetilde{\mathcal H}}
\newcommand\bv{\mathbf v}
\renewcommand\mod[1]{#1\textrm{-mod}}
\newcommand\SYT{{\mathrm SYT}}
\newcommand\YT{{\mathrm YT}}
\newcommand\Vbar{\overline{V}}
\begin{document}

\title[Yokonuma-type Hecke algebras]{Modules for Yokonuma-type Hecke
  algebras}
\author{Ojas Dav\'e}
\address{Department of Mathematics and Statistics\\
Radford University\\ 
Radford, VA 24142, USA}
\email{odave@radford.edu}

\author{J. Matthew Douglass}
\address{Department of Mathematics\\
  University of North Texas\\
  Denton, TX 76203, USA} 
\email{douglass@unt.edu}
\thanks{This work was partially supported by a grant from the Simons
  Foundation (Grant \#245399 to J.M.~Douglass). J.M.~Douglass would like to
  acknowledge that some of this material is based upon work supported by
  (while serving at) the National Science Foundation.}

\subjclass[2010]{Primary 20C08} 
\keywords{Representation theory; Hecke algebras; finite general linear
  groups; generic algebras}

\begin{abstract}\noindent
  This paper describes the module categories for a family of generic Hecke
  algebras, called Yokonuma-type Hecke algebras. Yokonuma-type Hecke
  algebras specialize both to the group algebras of the complex reflection
  groups G(r,1,n) and to the convolution algebras of (B',B')-double cosets
  in the group algebras of finite general linear groups, for certain
  subgroups B' consisting of upper triangular matrices. In particular,
  complete sets of inequivalent, irreducible modules for semisimple
  specializations of Yokonuma-type Hecke algebras are constructed.
\end{abstract}

\maketitle


\section{Introduction}\label{sec:intro}

\subsection{}
Suppose $n$ is a positive integer and $q$ is a prime power. Let
$G=\GL_n(\BBF_q)$ be the group of all invertible $n\times n$ matrices with
entries in the finite field $\BBF_q$ and let $U$ denote the subgroup of $G$
consisting of all unipotent upper triangular matrices. The natural action of
$G$ on the set of cosets $G/U$ determines the permutation representation
$\Ind_U^G (1_U)$ of $G$ (over $\BBC$). The endomorphism algebra of this
representation is anti-isomorphic to the subalgebra $\CH_{\BBC}(G,U)$ of the
group algebra $\BBC[G]$ consisting of functions that are constant on
$(U,U)$-double cosets. For a finite Chevalley group $G'$ with maximal
unipotent group $U'$, Yokonuma \cite{yokonuma:structure} gave a presentation
of $\CH_{\BBC}(G',U')$ and described some of its structure. When the prime
power $q$ is replaced by an indeterminate $\bq$, Yokonuma's presentation
(extended to $G$, see \cite{juyumaya:representation}) defines an algebra
over the polynomial ring $\BBZ[\bq]$. We call these algebras one-parameter
Yokonuma-Hecke algebras. The irreducible $\CH_{\BBC}(G,U)$-modules have been
constructed and classified by Thiem \cite{thiem:thesis} using a weight
space-type decomposition. Using combinatorial methods, Chlouveraki and
Poulain d'Andecy \cite{chlouverakipoulaindandecy:representation} have
constructed and classified the irreducible modules for one-parameter
Yokonuma-Hecke algebras.

Now suppose that $a$ and $b$ are relatively prime positive integers such
that $q-1=ab$. Then the multiplicative group of $\BBF_q$ (a finite cyclic
group of order $q-1$) has a unique factorization as $F_a F_b$, where
$|F_a|=a$ and $|F_b|=b$. Let $H_a$ (resp.~$H_b$) denote the subgroup of $G$
consisting of diagonal matrices with entries in $F_a$ (resp.~$F_b$). For
example, if $l$ is a prime that does not divide either $q$ or $n!$ and $a$
is the $l$-part of $q-1$, then $H_a$ is a Sylow $l$-subgroup of $G$. Set
$B_a=H_aU$. Notice that if $a=1$, then $B_a=U$ and if $a=q-1$, then $B_a=B$
is the Borel subgroup of upper triangular matrices in $G$. Yokonuma's
results in \cite{yokonuma:structure} can be used to describe the algebra
$\CH_{\BBC}(G,B_a)$ as a subalgebra of $\CH_{\BBC}(G,U)$. A presentation of
the algebra $\CH_{\BBC}(G,B_a)$ was given in
\cite[\S2]{alhaddaddouglass:generic}. When the prime power $q$ and the divisor
$a$ are replaced by indeterminates $\bq$ and $\ba$, the presentation in
\cite[\S3]{alhaddaddouglass:generic} defines an algebra over the polynomial ring
$\BBZ[\bq,\ba]$. We call these algebras Yokonuma-type Hecke algebras or
two-variable Yokonuma-Hecke algebras.

The Iwahori-Hecke algebra of $G$ is a $\BBZ[\bq]$-algebra that specializes
both to the centralizer algebra $\CH_{\BBC}(G,B)$ and to the group algebra
$\BBC [W]$, where $W$ is the group of permutation matrices in
$G$. Similarly, Yokonuma-type Hecke algebras specialize both to the
centralizer algebra $\CH_{\BBC}(G,B_a)$ and to the group algebra
$\BBC [WH_b]$. The group $WH_b$ is abstractly isomorphic to the complex
reflection group denoted by $G(b,1,n)$.

The main results in this paper are a ``block'' decomposition of the module
category of a Yokonuma-type Hecke algebra in \cref{sec:block}, and an
explicit construction and classification of the simple modules for
semisimple specializations of these algebras in \cref{sec:module}. Our
approach is based on Thiem's, but even for the algebras considered in
\cite{thiem:thesis} we give a more detailed description of the structure of
these module categories and finer information about the structure of the
simple modules.

Yokonuma-type Hecke algebras have been studied in several papers using
different presentations. To help the reader, we explain in \cref{sec:last}
the precise relationships between the presentation used by Chlouveraki and
Poulain d'Andecy \cite[\S2.1]{chlouverakipoulaindandecy:representation}, the
presentation used by Jacon and Poulain d'Andecy
\cite[\S2.3]{jaconpoulaindandecy:isomorphism}, and the presentation in this
paper. The relationship between Yokonuma's original presentation of
$\CH_{\BBC}(G,U)$ and the presentation found by Juyumaya
\cite[\S2]{juyumaya:nouveaux} is also sketched.

The results in this paper both supplement and complement results about the
structure of $\CH$ and $\CH$-modules given in
\cite{chlouverakipoulaindandecy:representation} and
\cite{jaconpoulaindandecy:isomorphism}. In particular, the constructions in
\cref{sec:block} and \cref{sec:module} lead to a uniform approach to the
different constructions of simple modules given in these two papers.

Let $\CH$ be the Yokonuma-type Hecke algebra defined in \cref{ssec:Hbn}
below, with scalars extended to $\BBC[\bq, \bq\inverse, \ba]$. As described
in \cref{sec:last}, $\CH$ is the algebra $Y_{d,n}$ defined in
\cite[\S2.3]{jaconpoulaindandecy:isomorphism} and the
$\BBC[\bq, \bq \inverse]$-algebra $Y_{d,n}(\bq)$ in
\cite[\S2.1]{chlouverakipoulaindandecy:representation} is a specialization
of $\CH$. With the notation introduced in the next section, there is a
direct sum decomposition
\begin{equation}
  \label{eq:dec1}
  \CH \cong \bigoplus_{\alpha\in \CC} \CH e_\alpha,  
\end{equation}
where $\CH e_\alpha$ is a two-sided ideal in $\CH$ and the index $\alpha$
determines a Young subgroup $\Sigma_\alpha$ of the symmetric group
$\Sigma_n$. Lusztig \cite[\S34]{lusztig:disconnectedVII} and Jacon and
Poulain d'Andecy \cite[\S3]{jaconpoulaindandecy:isomorphism} construct
explicit isomorphisms between $\CH e_\alpha$ and the matrix ring $M_{r_\alpha}
(\CI^\alpha)$, where $\CI^\alpha$ is the Iwahori-Hecke algebra of
$\Sigma_\alpha$. The block decomposition given in \cref{sec:block} may be
viewed as a categorical framework underlying these isomorphisms.

The simple $\BBC(\bq)Y_{d,n}(\bq)$-modules have been constructed by
Chlouveraki and Poulain d'Andecy
\cite[\S5]{chlouverakipoulaindandecy:representation} by defining an action
of the generators of $\BBC(\bq)Y_{d,n}(\bq)$ on a basis and checking
directly that the defining relations of $\BBC(\bq)Y_{d,n}(\bq)$ hold.
Somewhat more directly, as observed in
\cite[\S4.1]{jaconpoulaindandecy:isomorphism}, it follows from the
decomposition \eqref{eq:dec1} and the isomorphisms $\CH e_\alpha \cong
M_{r_\alpha}(\CI^\alpha)$ that every simple $\CH$-module is the space of
column vectors of size $r_\alpha$ with entries in $M$, where $M$ is a simple
$\CI^\alpha$-module.

In analogy with the so-called method of little groups
\cite[11.8]{curtisreiner:methodsI} (which applies to the group $G(b,1,n)$)
it follows from block decomposition in \cref{sec:block} that every simple
$\CH$-module is of the form $\CH\otimes_{\CH_\alpha} M$, where $\CH_\alpha$
is a subalgebra of $\CH$ that is isomorphic to $\CI^\alpha$ and $M$ is a
simple $\CH_\alpha$-module. As an $\CH_\alpha$-module, $\CH$ is free with
rank $r_\alpha$. In this way we obtain a coordinate-free construction of the
simple modules in \cite{jaconpoulaindandecy:isomorphism}. There are several
constructions of $\CI^\alpha$-modules that yield a complete set of simple
modules for any semisimple specialization of $\CI^\alpha$. Using Hoefsmit's
construction \cite{hoefsmit:representations} and the induction result from
\cref{sec:block}, in \cref{sec:module} a family of $\CH$-modules is defined
that specializes to the modules for one-parameter Yokonuma-Hecke algebras
constructed in \cite[\S5]{chlouverakipoulaindandecy:representation}. The
construction in \cref{sec:module} is valid more generally for Yokonuma-type
Hecke algebras and gives more information about these modules than is
immediately evident from their description in
\cite[\S5]{chlouverakipoulaindandecy:representation}. A new feature of the
construction in this paper is that it highlights how the natural action of
$W$ on the set of $b$-tableau is reflected in the module structure of these
$\CH$-modules, see \cref{pro:factab} and \cref{thm:irrep}.

\subsection{Notation}
For a positive integer $l$, $[l]$ denotes the set $\{1, \dots, l\}$ and
$\Sigma_l$ denotes the group of permutations of $[l]$.

\section{A block decomposition of
  \texorpdfstring{$\mathbf{\CH}$}{H}}\label{sec:block}

In this section we begin with the definition of the Yokonuma-type Hecke
algebra $\CH$ from \cite{alhaddaddouglass:generic}, then we describe the
algebra structure of $\CH$ and give some specific specializations. The main
result is a block decomposition of the category of $\CH$-modules.

\subsection{}\label{ssec:Hbn}
Suppose from now on that $n$ and $b$ are positive integers and that $\bq$
and $\ba$ are indeterminates. Define $\CH_{b,n}$ to be the unital
$\BBZ[\bq, \ba]$-algebra with generators
\[
\text{$T_1$, \dots, $T_n$, $R_1$, \dots $R_{n-1}$}
\]
and relations
\begin{alignat}{2}
  &T_j^{b}=1 \qquad&& \text{for $j\in [n]$,} \tag{$\textrm
    r_1$} \label{rel:1}  \\
  &T_jT_{j'}= T_{j'}T_j \qquad&& \text{for $j, j'\in[n]$,}
  \tag{$\textrm r_2$} \label{rel:2} \\
  &T_jR_{i}=R_{i}T_{s_i(j)} \qquad&& \text{for $j\in [n]$ and
    $i\in [n-1]$,} \tag{$\textrm r_3$} \label{rel:3} \\
  &R_{i}R_{i'}=R_{i'}R_{i} \qquad&& \text{for $i, i'\in [n-1]$ with
    $|i-i'|>1$,} \tag{$\textrm r_4$} \label{rel:4}\\
  &R_{i}R_{i+1}R_{i}= R_{i+1}R_{i}R_{i+1} \qquad&& \text{for $i\in [n-2]$,
    and} \tag{$\textrm r_5$} \label{rel:5}\\
  &R_{i}^2=\bq +\ba T_{i}^{(b^2-b)/2} E_i R_i \qquad&& \text{for $i\in
    [n-1]$,} \tag{$\textrm r_6$} \label{rel:6}
\end{alignat}
where $s_i = (i\ i+1)$ is the transposition in the symmetric group
$\Sigma_n$ that switches $i$ and $i+1$ and
\[
E_i= \sum_{k=0}^{b-1} T_i^k T_{i+1}^{-k}.
\]
If $b$ is odd, then $T_{i}^{(b^2-b)/2}=1$, and if $b$ is even, then
$T_{i}^{(b^2-b)/2}= T_{i}^{b/2}$ is a square root of $1$. Notice that
$\CH_{1,n}$ is the Iwahori-Hecke algebra of type $A_{n-1}$ with parameters
$\bq$ and $\ba$.

\subsection{}
Let $\mu_b=\langle \zeta \rangle$ be the group of $b\th$ roots of unity in
$\BBC$ and let $D$ denote the group of diagonal matrices in $\GL_n(\BBC)$
with diagonal entries in $\mu_b$. Suppose $d\in D$ and that the
$(j,j)$-entry of $d$ is $\zeta^{p_j}$ for $j\in [n]$. Define
\[
t_d=T_1^{p_1} \dotsm T_n^{p_n}\in \CH_{b,n}.
\]
Then if $\CD$ denotes the subalgebra of $\CH_{b,n}$ generated by $\{T_1,
\dots, T_n\}$, it follows from relations \cref{rel:1} and \cref{rel:2} that
the rule $d\mapsto t_d$ defines a $\BBZ[\bq, \ba]$-algebra isomorphism
between the group algebra $\BBZ[\bq, \ba][D]$ and $\CD$.

\subsection{}
Let $W$ denote the group of permutation matrices in $\GL_n(\BBC)$. The
matrices in $W$ permute the standard basis vectors $v_1$, \dots, $v_n$ of
$\BBC^n$, where $v_i$ is the column vector in $\BBC^n$ whose only non-zero
entry is a 1 in the $i\th$ coordinate. We use the bijection $v_i
\leftrightarrow i$ between $\{v_1, \dots, v_n\}$ and $\{1, \dots, n\}$ to
identify permutation matrices with permutations. Precisely, for $w\in W$,
$w$ also denotes the permutation in the symmetric group $\Sigma_n$ defined
by
\[
wv_i=v_{w(i)}.
\]
Whether $w\in W$ denotes a matrix or a permutation will always be clear from
context.

For $i\in [n-1]$ let $s_i$ denote the permutation matrix in $W$ obtained
from the identity matrix by interchanging rows $i$ and $i+1$ and set
$\CS=\{s_1, \dots, s_{n-1}\}$. (Note that we considered $s_i$ as a
permutation in relation \cref{rel:3}.) Then $\CS$ is a set of Coxeter
generators for $W$ and so determines a length function on $W$ and the notion
of a reduced expression of an element of $W$. Suppose $w\in W$ and $s_{i_1}
\dotsm s_{i_k}$ is a reduced expression for $w$. Define
\[
t_w= R_{i_1} \dotsm R_{i_k}\in \CH_{b,n}.
\]
It follows from relations \cref{rel:4}, \cref{rel:5}, and Matsumoto's
Theorem that $t_w$ is well-defined.

\subsection{}
The group $W$ normalizes $D$ and the semidirect product $WD$ is the complex
reflection group denoted by $G(b,1,n)$. For $x=wd\in WD$, define
\[
t_{x}= t_wt_d\in \CH_{b,n}.
\]
The element $t_x$ is well-defined because $W\cap D=1$. Extend the length
function $\ell$ on $W$ to all of $WD$ by defining $\ell(x)= \ell(w)$ when
$x=wd$. The next theorem is proved in \cite[\S3]{alhaddaddouglass:generic}.

\begin{theorem}\label{thm:rel}
  The set $\{\, t_x\mid x\in WD\,\}$ is an $\BBZ[\bq, \ba]$-basis of
  $\CH_{b,n}$. For $x$ in $WD$, $d$ in $D$, and $s=s_i$ in $\CS$ the
  following relations hold:
  \begin{align*}
    t_xt_d&= t_{xd} \\
    t_dt_x&=t_{dx} \\
    t_st_x&= \begin{cases} t_{sx} &\text{if $\ell(sx)>\ell(x)$}\\
      \bq t_{sx}+ \ba T_{i}^{(b^2-b)/2} E_i t_x &\text{if
        $\ell(sx)<\ell(x)$} \end{cases} \\
    t_xt_s&= \begin{cases}
      t_{xs} &\text{if $\ell(xs)>\ell(x)$}\\
      \bq t_{xs}+ \ba t_x T_{i}^{(b^2-b)/2} E_i &\text{if
        $\ell(xs)<\ell(x)$.}
  \end{cases}
  \end{align*}
\end{theorem}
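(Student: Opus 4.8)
The plan is to establish the basis claim and the four multiplication rules in sequence, leveraging the generator-and-relation presentation from \cref{ssec:Hbn} together with the well-definedness of $t_w$ and $t_d$ already recorded in the preceding subsections.

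First I would establish that $\{t_x \mid x \in WD\}$ spans $\CH_{b,n}$ over $\BBZ[\bq,\ba]$. Since $\CH_{b,n}$ is generated by the $T_j$ and $R_i$, it suffices to show that the $\BBZ[\bq,\ba]$-span of the $t_x$ is closed under left multiplication by each generator. Left multiplication by $T_j$ is handled by relation \cref{rel:3}, which lets one push any $T_j$ past the $R_i$ factors (permuting indices via $s_i$) until it merges into the $\CD$-part; this is essentially the assertion $t_d t_x = t_{dx}$ and $t_x t_d = t_{xd}$, which follow from \cref{rel:1}, \cref{rel:2}, \cref{rel:3}, and the isomorphism $\BBZ[\bq,\ba][D]\cong\CD$. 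Left multiplication by $R_i$ (equivalently $t_s$ for $s=s_i$) is the content of the third displayed formula: when $\ell(sx)>\ell(x)$, concatenating a reduced expression gives $t_s t_x = t_{sx}$ directly from the definition of $t_w$; when $\ell(sx)<\ell(x)$, I would write $x=s x'$ with $\ell(x')<\ell(x)$, so that $t_s t_x = R_i^2 t_{x'}$, and then apply the quadratic relation \cref{rel:6} to expand $R_i^2 = \bq + \ba T_i^{(b^2-b)/2} E_i R_i$, yielding the stated two-term formula after using $R_i t_{x'} = t_x$ and the commutation relation \cref{rel:3} to move the $E_i$-factor appropriately. The fourth formula for $t_x t_s$ is proved symmetrically by right multiplication.

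The spanning argument shows $\dim_{\BBZ[\bq,\ba]}\CH_{b,n} \le |WD| = b^n\, n!$. To upgrade spanning to a basis, I would establish linear independence, and here the cleanest route is a dimension count via a faithful module or a specialization. The natural candidate is to exhibit $\CH_{b,n}$ acting on a free module of rank $|WD|$ — for instance the regular-type module with basis indexed by $WD$ on which the $T_j$ and $R_i$ act by the formulas dictated by the relations — and check that the relations \cref{rel:1}--\cref{rel:6} are satisfied; faithfulness then forces the $t_x$ to be independent. Alternatively one may specialize $\bq\mapsto 1$, $\ba\mapsto 0$ to recover the group algebra $\BBZ[G(b,1,n)]$, whose standard basis has exactly $|WD|$ elements, and argue that a spanning set of the correct cardinality mapping onto a basis after specialization must itself be a basis. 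Since the theorem is attributed to \cite[\S3]{alhaddaddouglass:generic}, I would cite that construction for the independence half rather than reconstruct it here.

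The main obstacle is the linear independence (equivalently, the dimension lower bound), not the multiplication rules: the rules are forced once one has the basis, but proving that no nontrivial relation collapses the $t_x$ requires genuinely producing a faithful representation or a well-behaved specialization, and verifying that the proposed action respects all six defining relations — especially the interaction of the quadratic relation \cref{rel:6} with the braid relation \cref{rel:5}, where the idempotent-like elements $E_i$ and the twist $T_i^{(b^2-b)/2}$ must be tracked carefully through length changes. The commutation \cref{rel:3} does most of the bookkeeping, but confirming that the two length-lowering cases in the third and fourth formulas are genuinely consistent (rather than over- or under-counting) is the delicate point I would verify most carefully.
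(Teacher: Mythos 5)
The paper does not prove this theorem at all: it states the result and defers entirely to \cite[\S3]{alhaddaddouglass:generic}, which is also where your proposal ultimately lands. Your sketch of the spanning half and of the four multiplication rules is correct and is the standard argument: closure of the span under left multiplication by the $T_j$ follows from \cref{rel:1}--\cref{rel:3}, the length-increasing case of $t_st_x$ follows from Matsumoto's theorem and concatenation of reduced words, and the length-decreasing case follows by writing $w=s_iw'$ with $\ell(w')=\ell(w)-1$ and expanding $R_i^2$ via \cref{rel:6}; the right-handed rules are symmetric. For linear independence, your first alternative (a faithful action on a free module of rank $|WD|$, with the relations verified directly) is the correct route and is essentially what the cited reference does. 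Your second alternative is not sufficient as stated: a single specialization $\bq\mapsto 1$, $\ba\mapsto 0$ cannot detect a putative relation $\sum_x c_x t_x=0$ whose coefficients $c_x\in\BBZ[\bq,\ba]$ all happen to vanish at that point, so one would need either a Zariski-dense family of specializations or the faithful-module argument. Since you explicitly defer the independence half to \cite[\S3]{alhaddaddouglass:generic}, just as the paper does, the proposal is acceptable, but you should drop or repair the single-specialization alternative.
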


\subsection{Specialization}\label{ssec:sp}
A specialization of a commutative ring with identity $R$ is a ring
homomorphism $f\colon R\to k$, where $k$ is a commutative ring with identity
and $f(1)=1$.

Suppose $H$ is an $R$-algebra and $f\colon R\to k$ is a specialization. Then
$f$ determines an $R$-module structure on $k$ and we may form the
specialized algebra
\[
{}_fH=k\otimes_{R} H.
\] 
If $V$ is an $H$-module, define ${}_fV= k\otimes_{R} V$. Then ${}_fV$ is
naturally an ${}_fH$-module and specialization determines a functor from the
category of $H$-modules to the category of ${}_fH$-modules. In the special
case when $R\subseteq k$ and $\imath \colon R\to k$ is the inclusion, we
replace the subscript $\imath$ by $k$, so ${}_k H= k\otimes_{R}H$ and ${}_kV
=k\otimes_{R} V$. In addition, for $h\in H$ and $v\in V$, we sometimes abuse
notation slightly and denote $1\otimes h$ by $h$ and $1\otimes v$ by
$v$. With this convention, ${}_kH$ is the space of all $k$-linear
combinations of elements in $H$, and similarly for ${}_kV$.

There are two types of specializations of $\BBZ[\bq, \ba]$ that relate
$\CH_{b,n}$ to the representation theory of finite groups.
\begin{enumerate}
\item If $f\colon \BBZ[\bq, \ba]\to k$ is a specialization such that
  $f(\bq)=1$ and $f(\ba)=0$, then clearly
  \[ 
  {}_f \CH_{b,n}\cong k[WD] \cong k[G(b,1,n)].
  \]
\item If $f\colon \BBZ[\bq, \ba]\to k$ is a specialization such that $k$
  is a field with characteristic zero, $f(\bq)=q$ is a prime power, and
  $f(\ba)=a$, where $q-1=ab$ with $a$ and $b$ relatively prime, then with
  the notation used in \cref{sec:intro}, it is shown in
  \cite{alhaddaddouglass:generic} that
  \[ 
  {}_f \CH_{b,n}\cong \CH_k(G,B_a),
  \]
  where the algebra on the right-hand side is the convolution algebra of
  $k$-valued functions on $G=\GL_n(\BBF_q)$ that are constant on $(B_a,
  B_a)$-double cosets. In this isomorphism $WD$ is identified with a group
  of monomial matrices in $G$ and the basis element $1\otimes t_{x}$ of
  ${}_f \CH_{b,n}$ corresponds to $|B_a|\inverse$ times the characteristic
  function of the double coset $B_axB_a$.

  In particular, if $b=q-1$ and $a=1$, then ${}_f \CH_{q-1,n}\cong
  \CH_k(G,U)$ and the presentation given by relations
  \cref{rel:1}--\cref{rel:6} is essentially that given by Yokonuma
  \cite{yokonuma:structure}.
\end{enumerate}

Taking $k=\BBC$ in both cases above, it follows from the results in
\cite{alhaddaddouglass:generic} and Tits' Deformation Theorem
\cite[68.17]{curtisreiner:methodsII} that $\CH_{\BBC}(G,B_a) \cong
\BBC[G(b,1,n)]$, so as a special case, $\CH_{\BBC}(G,U) \cong
\BBC[G(q-1,1,n)]$.

\subsection{The ring \texorpdfstring{$\mathbf{A}$}{A} and the algebra
  \texorpdfstring{$\mathbf{\CH}$}{H}} \label{ssec:AH} 

Recall that $n$ and $b$ are positive integers, $\bq$ and $\ba$ are
indeterminates, and $\zeta$ is a complex primitive $b\th$-root of unity. In
order to continue we need to replace the scalar ring $\BBZ[\bq, \ba]$ by the
smallest sufficiently large extension in which the constructions in this
section and the next can be carried out.

Define $A$ to be the smallest subring of an algebraic closure of $\BBC(\bq,
\ba)$ with the following properties.
\begin{enumerate}
\item $A$ contains $1$, $\bq$, $\ba$, $\zeta$, and a square root of
  $b^2\ba^2+4\bq$, denoted by $\sqrt{b^2 \ba^2+4\bq}$.
\item $2$, $b$, and $\bq$ are units in $A$.
\item For $k\in [n-2]$ the elements $1+x +\dotsm+ x^k$ are units in $A$,
  where
  \[
  x= \frac {b^2\ba^2+ b\, \ba \sqrt{b^2 \ba^2+4\bq}+2\bq} {2\bq}.
  \]
  (It is shown in \cref{lem:unit} that $x$ is not a root of unity, so $1+x
  +\dotsm+ x^k\ne 0$ for all $k>0$.)
\end{enumerate}
More concisely, using the standard notation for localization and adjoining
elements to a subring,
\[
A= X\inverse\left( Q\inverse \left(Y\inverse \BBZ[\bq, \ba] \right)
  [\sqrt{b^2 \ba^2+4\bq}] \right),
\]
where $X= \{\, \sum_{i=0}^{k}x^i \mid k\in [n-2]\,\}$, $Q= \{\, \bq^i\mid
i\geq0\,\}$, and $Y= \{\, 2^ib^j\mid i,j\geq0\,\}$. By construction $A$ is
an integral domain. Let $K$ denote the quotient field of $A$.

Define 
\[
\CH= A\otimes_{\BBZ[\bq, \ba]} \CH_{b,n}= {}_A\CH_{b,n}.
\]
In the rest of this paper we will be concerned with the $A$-algebra $\CH$
and its specializations.

\begin{lemma}\label{lem:unit}
  The element $x\in A$ is not a root of unity.
\end{lemma}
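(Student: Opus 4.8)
The plan is to show that
\[
x= \frac{b^2\ba^2+ b\,\ba\sqrt{b^2\ba^2+4\bq}+2\bq}{2\bq}
\]
cannot be a root of unity by viewing it as an explicit algebraic function of the two indeterminates $\bq$ and $\ba$ over $\BBC$. The key observation is that $x$ lies in the field $\BBC(\bq,\ba)[\sqrt{b^2\ba^2+4\bq}]$, which is a quadratic extension of the field of rational functions $\BBC(\bq,\ba)$. If $x$ were a root of unity, then $x^N=1$ for some $N\ge 1$, and in particular $x$ would be integral over $\BBC$ and algebraic of degree at most $2$ over $\BBC(\bq,\ba)$; but a root of unity is a \emph{constant}, i.e.\ an element of $\BBC$. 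So the cleanest route is to argue directly that $x\notin\BBC$, which already forces $x$ not to be a root of unity, since every root of unity is algebraic over $\BBQ$ and in particular lies in $\BBC$.

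First I would establish that $x$ is not constant as a function of $\bq$ and $\ba$. One way is to specialize $\ba=0$: then $\sqrt{b^2\ba^2+4\bq}$ becomes $2\sqrt{\bq}$ and the formula gives $x=\tfrac{2\bq}{2\bq}=1$. Then specialize $\ba$ to a nonzero value (say along the curve $\ba=1$) and check that $x\ne 1$; concretely, with $\ba=1$ one computes
\[
x=\frac{b^2+ b\sqrt{b^2+4\bq}+2\bq}{2\bq},
\]
which visibly depends on $\bq$ and is not identically $1$ (for large $\bq$ its value tends to $1$ but it is never exactly $1$ for finite positive $\bq$, since $b^2+b\sqrt{b^2+4\bq}>0$). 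Hence $x$ takes at least two distinct values as $(\bq,\ba)$ varies, so $x$ is a nonconstant element of the function field, and in particular $x\notin\BBC$.

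An even more transparent argument, which avoids specialization subtleties, is to compute the norm of $x$ for the quadratic extension $\BBC(\bq,\ba)[\sqrt{b^2\ba^2+4\bq}]/\BBC(\bq,\ba)$. If $x$ were a root of unity then so would be its Galois conjugate $\bar x$ (obtained by replacing $\sqrt{b^2\ba^2+4\bq}$ by $-\sqrt{b^2\ba^2+4\bq}$), and the product $x\bar x$ would be a root of unity lying in $\BBC(\bq,\ba)$, hence a complex constant. A direct computation gives
\[
x\bar x=\frac{(b^2\ba^2+2\bq)^2-b^2\ba^2(b^2\ba^2+4\bq)}{4\bq^2}
=\frac{4\bq^2}{4\bq^2}=1,
\]
so the norm gives no contradiction by itself; instead one checks the sum $x+\bar x=\frac{b^2\ba^2+2\bq}{\bq}=1+\frac{b^2\ba^2}{2\bq}\cdot\frac{2}{b^2\cdots}$, more simply $x+\bar x=2+\frac{b^2\ba^2}{\bq}$, which is a nonconstant element of $\BBC(\bq,\ba)$. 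A root of unity and its conjugate would have constant sum, so $x$ cannot be a root of unity.

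The main obstacle I anticipate is purely bookkeeping: making sure the quadratic conjugation is the correct field automorphism and that the minimal polynomial of $x$ over $\BBC(\bq,\ba)$ is genuinely $t^2-(x+\bar x)t+x\bar x$ with nonconstant coefficient $x+\bar x=2+b^2\ba^2/\bq$. Once the trace $x+\bar x$ is seen to be a nonconstant rational function in $\bq$ and $\ba$, the conclusion is immediate, since any root of unity $\zeta$ satisfies $\zeta\in\BBC$ and has $\BBC$-conjugates summing to a rational integer, contradicting the nonconstancy. I would therefore organize the proof around the single clean identity $x+\bar x=2+b^2\ba^2/\bq$ (together with $x\bar x=1$), and conclude that $x\notin\BBC$, whence $x$ is not a root of unity.
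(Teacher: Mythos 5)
Your argument is correct, but it takes a genuinely different route from the paper. You reduce to showing $x\notin\BBC$, using the fact that any root of unity in an algebraic closure of $\BBC(\bq,\ba)$ must already lie in $\BBC$ (since $t^N-1$ has all $N$ of its roots there), and then rule out $x\in\BBC$ by computing the trace $x+\xbar=2+b^2\ba^2/\bq$ for the quadratic extension $\BBC(\bq,\ba)[\sqrt{b^2\ba^2+4\bq}]$: a constant is fixed by the conjugation, so its trace is constant, whereas $2+b^2\ba^2/\bq$ is not. The paper instead works modulo the ideal $I=(b\ba-\bq+1)$ of $A$, under which $b^2\ba^2+4\bq$ becomes $(\bq+1)^2$ and $x$ becomes $\bq^{\pm1}$; then $x^k=1$ would force $\bq^{\pm k}-1\in I\cap\BBZ[\bq,\bq\inverse]=0$. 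The paper's reduction is essentially the specialization $\ba\mapsto(\bq-1)/b$ relevant to the Iwahori--Hecke case and is purely ring-theoretic; yours is more self-contained and does not require finding a clever ideal, at the cost of a little field theory. Two small points to tidy up: (i) for the conjugation to be a genuine automorphism you must check that $b^2\ba^2+4\bq$ is not a square in $\BBC(\bq,\ba)$ --- immediate, since it is linear in $\bq$ and hence irreducible in $\BBC[\bq,\ba]$; alternatively you can bypass Galois theory entirely by verifying directly that $x^2-(2+b^2\ba^2/\bq)x+1=0$ and observing that no element of $\BBC$ satisfies this relation. (ii) The remark that the conjugates of a root of unity sum to a rational integer conflates conjugation over $\BBQ$ with conjugation over $\BBC(\bq,\ba)$; all you need is that the sum would lie in $\BBC$. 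Your preliminary specialization argument ($\ba=0$ versus $\ba=1$) is dispensable and has the branch-of-square-root and denominator issues you yourself flag, so the trace identity is the right thing to build the proof around.
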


\begin{proof}
  Let $I$ be the ideal in $A$ generated by $b\ba -\bq+1$. Then $b\ba +I=
  \bq-1+I$ and so $b^2\ba^2+4\bq +I= (\bq+1)^2+I$. It follows that
  $\sqrt{b^2\ba^2+4\bq}+I= \pm(\bq+1)+I$. Therefore,
  \[
  x+I= \frac {b^2\ba^2+ b\ba \sqrt{b^2\ba^2+4\bq}+2\bq} {2\bq}+I =
  \frac{\bq^2+1 \pm (\bq^2-1)} {2\bq} +I = \bq^{\pm 1}+I.
  \]
  Now if $x^k=1$, then $\bq^{\pm k} -1\in I \cap \BBZ[\bq, \bq\inverse] =
  0$, and so $k=0$. Thus, $x$ is not a root of unity.
\end{proof}

\subsection{Characters, idempotents, and weights}\label{ssec:ciw}
In this section we give the straightforward adaption of Thiem's
constructions \cite[Chapter 6]{thiem:thesis} to the algebra $\CH$.

Let $X(D)$ denote the group of $A$-characters of $D$. That is, the group
of all group homomorphisms $\chi\colon D\to A^\times$, where $A^\times$
denotes the unit group of $A$. Note that $|X(D)|=b^n$ because $A$ contains
a primitive $b\th$ root of unity. An explicit construction of the characters
in $X(D)$ is given in \cref{ssec:oreps}.

For $\chi\in X(D)$ define
\[
e_\chi= b^{-n} \sum_{d\in D} \chi(d\inverse) t_d \in \CD.
\]
Then $e_\chi$ is the centrally primitive idempotent in $\CD$ with
$t_d e_\chi= \chi(d) e_\chi$ for $d\in D$. Moreover,
$\{\, e_\chi\mid \chi\in X(D)\,\}$ is an $A$-basis of $D$ and a complete set
of orthogonal idempotents in $\CD$, and hence in $\CH$. 

The group $W$ normalizes $D$ and so acts on $X(D)$ with $(w \cdot \chi)(d)=
\chi(w\inverse dw)$ for $w\in W$, $\chi\in X(D)$, and $d\in D$. Let $\CC$
denote the set of orbits of $W$ in $X(D)$, and for each orbit $\alpha\in \CC$
fix a representative $\chi_\alpha\in \alpha$.

Using \cref{thm:rel} it is straightforward to check that $\{\, t_we_\chi\mid
w\in W, \chi\in X(D)\,\}$ is an $A$-basis of $\CH$ and that
\begin{equation}
  \label{eq:we1}
  t_w e_\chi= e_{w\cdot \chi} t_w  
\end{equation}
for $w\in W$ and $\chi\in X(D)$.
 
Suppose $\alpha\in \CC$. Define
\[
\CH_\alpha= e_{\chi_\alpha} \CH e_{\chi_\alpha}\quad\text{and}\quad
e_\alpha= \sum_{\chi\in \alpha} e_\chi.
\]
Then $\CH_\alpha$ is a subalgebra of $\CH$ and $e_\alpha$ is an idempotent
in $\CD$.  

It is shown in \cite{alhaddaddouglass:generic} that $t_w$ is a unit in $\CH$
for each $w$ in $W$. Thus, by \cref{eq:we1},
\[
\CH e_\chi= \CH t_w e_\chi = \CH e_{w\cdot \chi} t_w,
\]
and so right multiplication by $t_w$ defines an $\CH$-module isomorphism
$\CH e_\chi\cong \CH e_{w\cdot \chi}$. This implies that $e_\chi\CH e_\chi
\cong e_{w\cdot \chi}\CH e_{w\cdot \chi}$ for all $w\in W$. In particular,
up to isomorphism, $\CH_\alpha$ depends only on $\alpha$ and not the choice
of $\chi_\alpha$.

It follows from \cref{eq:we1} that $e_\alpha$ is in the center of
$\CH$. Therefore, the set $\{\, e_\alpha \mid \alpha\in \CC\,\}$ is a
complete set of orthogonal, central idempotents in $\CH$, and there are
direct sum decompositions
\begin{equation*}
  \label{eq:tsi}
  \CH e_\alpha\cong \bigoplus_{\chi\in \alpha} \CH e_\chi \quad\text{and}\quad
  \CH\cong \bigoplus_{\alpha\in \CC} \CH e_\alpha \cong \bigoplus_{\alpha\in
    \CC} \bigg( \bigoplus_{\chi\in \alpha} \CH e_\chi \bigg),  
\end{equation*}
where each $\CH e_\alpha$ is a two-sided ideal.  

Now suppose $V$ is an $\CH$-module. For $\chi\in X(D)$ define
\[
V_\chi= e_\chi V = \{\, v\in V\mid t_dv=\chi(d) v\ \forall\ d\in D\,\}.
\]
Then $V_\chi$ is an $A$-submodule of $V$ and
\[
V\cong \bigoplus_{\chi\in X(D)} V_\chi
\]
is a decomposition of $V$ as a direct sum of $A$-modules. We can think of
characters in $X(D)$ as weights and then $V_\chi$ is the $\chi$-weight space
of $V$.

If $w\in W$, $d\in D$, and $v\in V_\chi$, then by \cref{thm:rel},
\begin{equation} 
  \label{eq:we2}
  t_d (t_w v)= t_w t_{w\inverse dw}v = \chi(w\inverse dw) t_w v= (w\cdot
  \chi)(d)\, t_w v  ,
\end{equation}
and so multiplication by $t_w$ defines an $A$-linear isomorphism
$V_\chi\cong V_{w\cdot \chi}$. It follows that
\[
\sum_{w\in W} t_w V_\chi =\sum_{w\in W} V_{w\cdot \chi}
\]
is an $\CH$-submodule of $V$.

For $\alpha\in \CC$ define
\[
V_\alpha= e_\alpha V=\sum_{\chi\in \alpha} V_\chi.
\]
Then $V_\alpha$ is an $\CH$-submodule of $V$ and
\begin{equation}
  \label{eq:vd}
  V\cong \bigoplus_{\alpha\in \CC} V_\alpha
\end{equation}
is a decomposition of $V$ as a direct sum of $\CH$-submodules.

\subsection{Blocks}\label{ssec:bk}
For a ring $R$, let $\mod R$ denote the category of left $R$-modules.

Suppose $\alpha\in \CC$. Define $\CO_\alpha$ to be the full subcategory of
$\mod \CH$ with objects the collection of all $\CH$-modules $V$ such that
$V=V_\alpha$. At the risk of abusing terminology we say that $\CO_\alpha$ is
a block of $\CH$.

If $V$ and $V'$ are $\CH$-modules and $\psi\colon V\to V'$ is an
$\CH$-module homomorphism, then $\psi(V_\alpha)\subseteq V'_\alpha$ for all
$\alpha\in \CC$. Thus, if $\alpha, \beta\in \CC$ with $\alpha\ne \beta$, $V$
is a module in $\CO_\alpha$, and $V'$ is a module in $\CO_\beta$, then
$\Hom_{\CH}(V,V')=0$. Hence, it follows from the decompositions in
\cref{ssec:ciw}\cref{eq:vd} that
\begin{equation}\label{eq:bk}
  \mod{\CH} \simeq \bigoplus_{\alpha\in \CC} \CO_\alpha   
\end{equation}
is a decomposition of $\mod{\CH}$ as a direct sum of abelian subcategories.

If $V$ is an $\CH$-module, then $V_{\chi_\alpha} =e_{\chi_\alpha}V$ is an
$\CH_\alpha$-module. Let
\[
G_\alpha\colon \CO_\alpha \to \mod {\CH_\alpha}
\]
be the restriction functor defined on objects by $G_\alpha(V)=
V_{\chi_\alpha}$ and on homomorphisms by restriction.

If $V'$ is an $\CH_\alpha$-module, then $e_{\chi_\alpha} \big(\CH
e_{\chi_\alpha} \otimes_{\CH_\alpha} V' \big)= \CH_\alpha
\otimes_{\CH_\alpha} V' \cong V'$ and so the $\CH$-module $\CH
e_{\chi_\alpha} \otimes_{\CH_\alpha} V'$ is in $\CO_\alpha$.  Let
\[
F_\alpha \colon \mod {\CH_\alpha}\to \CO_\alpha
\]
be the induction functor defined on objects by $F_\alpha (V')= \CH
\otimes_{\CH_\alpha} V' = \CH e_{\chi_\alpha} \otimes_{\CH_\alpha} V'$ and
on morphisms by $F_\alpha (\psi)= \id\otimes \,\psi$.

Suppose $f\colon A\to k$ is a specialization. Then $f(b)$ is a unit in $k$
and it follows that $f(\zeta)$ is a primitive $b\th$ root of unity. (If
$f(\zeta)$ is an $l\th$ root of unity and $b=lm$, then $1+f(\zeta)^l+ \dots
+ (f(\zeta)^l)^{m-1} = m $ is not equal to zero in $k$. But $\zeta^l$ is an
$m\th$ root of unity, so $1+f(\zeta^l)+ \dots + f(\zeta^l)^{m-1} = 0$, which
is a contradiction unless $m=1$ and $l=b$.) Therefore, the rule $\chi\mapsto
f\circ \chi$ defines an isomorphism between $X(D)$ and $X_k(D)$ so the
preceding constructions all apply in $\mod {{}_f\CH}$ and there are
categories ${}_f\CO$ and functors ${}_fF_\alpha$ and ${}_fG_\alpha$.

The next theorem is an analog of results of Lusztig
\cite[\S34]{lusztig:disconnectedVII} and Jacon and Poulain d'Andecy
\cite[\S3]{jaconpoulaindandecy:isomorphism}.

\begin{theorem}\label{thm:eqv}
  Suppose $\alpha\in \CC$ and $f\colon A\to k$ is a specialization. Then the
  pair of functors $({}_fF_\alpha ,{}_fG_\alpha)$ is an adjoint equivalence
  of categories. In particular, the block ${}_f\CO_\alpha$ of ${}_f\CH$ is
  naturally equivalent to the category of ${}_f\CH_\alpha$-modules.
\end{theorem}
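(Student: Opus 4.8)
The plan is to establish the adjoint equivalence by exhibiting the standard unit and counit of the adjunction between induction and restriction, and then showing they are natural isomorphisms. Since the specialization isomorphism $\chi\mapsto f\circ\chi$ identifies $X(D)$ with $X_k(D)$ and transports all the idempotents $e_\chi$, $e_\alpha$ and the subalgebra $\CH_\alpha$ to their specialized counterparts, it suffices to prove the statement for $\CH$ itself (i.e.\ with $f$ the inclusion $A\hookrightarrow k$, or just over $A$); the general case then follows by applying the specialization functor. So I would first reduce to proving that $(F_\alpha, G_\alpha)$ is an adjoint equivalence $\mod{\CH_\alpha}\simeq \CO_\alpha$ over $A$.

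Next I would set up the adjunction. The key structural fact, already recorded in the excerpt, is that $G_\alpha(V)=e_{\chi_\alpha}V$ and that $e_{\chi_\alpha}\bigl(\CH e_{\chi_\alpha}\otimes_{\CH_\alpha}V'\bigr)\cong V'$ via the multiplication map $e_{\chi_\alpha}h\otimes v'\mapsto e_{\chi_\alpha}h\,v'$, whose inverse sends $v'\mapsto e_{\chi_\alpha}\otimes v'$. This is precisely the counit $\varepsilon\colon G_\alpha F_\alpha\to \id_{\mod{\CH_\alpha}}$, and I would check it is a natural isomorphism directly from $\CH_\alpha\otimes_{\CH_\alpha}V'\cong V'$. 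The unit $\eta\colon \id_{\CO_\alpha}\to F_\alpha G_\alpha$ sends $V$ to the map $V\to \CH e_{\chi_\alpha}\otimes_{\CH_\alpha}e_{\chi_\alpha}V$; since $V=V_\alpha=e_\alpha V$ for objects of $\CO_\alpha$, I would define $\eta_V$ using the decomposition $e_\alpha=\sum_{\chi\in\alpha}e_\chi$ together with the isomorphisms $V_\chi\cong V_{\chi_\alpha}$ given by multiplication by $t_w$ (for $w$ with $w\cdot\chi_\alpha=\chi$), as in \eqref{eq:we2}.

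The heart of the argument, and the step I expect to be the main obstacle, is verifying that $\eta_V$ is an isomorphism for every $V\in\CO_\alpha$, equivalently that the multiplication map $\CH e_{\chi_\alpha}\otimes_{\CH_\alpha}e_{\chi_\alpha}V\to e_\alpha V=V$ is bijective. The surjectivity follows because each weight space $V_\chi$ with $\chi\in\alpha$ is hit via $t_w\cdot e_{\chi_\alpha}V$, using that the $t_w$ are units (so $V_\chi=t_w V_{\chi_\alpha}$). For injectivity the crucial point is to understand $\CH e_{\chi_\alpha}$ as a right $\CH_\alpha$-module: I would show that it is free of rank $|\alpha|=r_\alpha$, with basis $\{\,t_{w}e_{\chi_\alpha}\mid w\in W/W_{\chi_\alpha}\,\}$ where $W_{\chi_\alpha}$ is the stabilizer of $\chi_\alpha$, by using \cref{thm:rel} and \eqref{eq:we1} to move idempotents past the $t_w$ and collapse products $t_w e_{\chi_\alpha}t_{w'}$ into $\CH_\alpha=e_{\chi_\alpha}\CH e_{\chi_\alpha}$. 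Granting this freeness, the tensor product decomposes as $\bigoplus_{w} t_w e_{\chi_\alpha}\otimes_{\CH_\alpha}e_{\chi_\alpha}V$, and matching the $w$-summand to the weight space $V_{w\cdot\chi_\alpha}$ via the isomorphism of \eqref{eq:we2} shows $\eta_V$ is a bijection. The technical care needed is precisely in establishing this basis for $\CH e_{\chi_\alpha}$ over $\CH_\alpha$ and checking compatibility of the orbit-sum $e_\alpha=\sum_{\chi\in\alpha}e_\chi$ with the coset decomposition of $W$; once that bookkeeping is in place, naturality of $\eta$ and $\varepsilon$ and the triangle identities are routine, completing the proof that $(F_\alpha,G_\alpha)$ — and hence $({}_fF_\alpha,{}_fG_\alpha)$ — is an adjoint equivalence.
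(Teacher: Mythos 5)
Your core argument --- reducing everything to showing that the multiplication (counit) map $\CH e_{\chi_\alpha}\otimes_{\CH_\alpha}e_{\chi_\alpha}V\to V$ is bijective, getting surjectivity from the fact that the units $t_w$ carry $V_{\chi_\alpha}$ onto every weight space $V_\chi$ with $\chi\in\alpha$, and getting injectivity by decomposing $\CH e_{\chi_\alpha}$ as a free right $\CH_\alpha$-module on the elements $t_we_{\chi_\alpha}$ for $w$ running over coset representatives of the stabilizer of $\chi_\alpha$ --- is essentially the paper's argument. The freeness you ask for is exactly \cref{lem:alba}\cref{i:2}, where the representatives are taken to be the \emph{minimal length} ones $W^\alpha$; for arbitrary representatives the product $t_{w_1}t_{w_2}$ need not equal $t_{w_1w_2}$, so you should use those. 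In the proof of the theorem the paper phrases the injectivity step slightly more directly, showing $F_\alpha G_\alpha(V)_{w\cdot\chi_\alpha}=t_w\otimes V_{\chi_\alpha}$ and $F_\alpha G_\alpha(V)_{\chi_\alpha}=1\otimes V_{\chi_\alpha}$ and then matching weight spaces, but this is the same computation. (A minor terminological point: since $F_\alpha$ is the left adjoint, the map $F_\alpha G_\alpha\to\id$ is the counit and $\id\to G_\alpha F_\alpha$ the unit; you have the names swapped, which does not affect the substance.)

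The one step that does not work as stated is your opening reduction: ``it suffices to prove the statement over $A$; the general case then follows by applying the specialization functor.'' The functor $k\otimes_A-$ from $\CH$-modules to ${}_f\CH$-modules is neither full nor essentially surjective --- there are ${}_f\CH_\alpha$-modules and objects of ${}_f\CO_\alpha$ that are not of the form ${}_fV$ --- so an equivalence $\mod{\CH_\alpha}\simeq\CO_\alpha$ over $A$ cannot be transported along specialization to yield $\mod{{}_f\CH_\alpha}\simeq{}_f\CO_\alpha$. What is true, and what the paper does, is that every ingredient of the argument (the basis $\{\,t_we_\chi\,\}$, the idempotents $e_\chi$ and $e_{\chi_\alpha}$, the invertibility of the $t_w$, the weight space decomposition, and the basis of \cref{lem:alba}\cref{i:2}) carries over verbatim to ${}_f\CH$, so the identical proof can simply be run over $k$. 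That is the correct repair; with it in place your argument goes through.
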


\begin{proof}
  To help minimize subscripts, in this proof we suppress the specialization
  from the notation. For example we denote ${}_fF_\alpha$, ${}_f\CH$, and
  $1\otimes e_{\chi_\alpha}$, simply by $F_\alpha$, $\CH$, and
  $e_{\chi_\alpha}$, respectively.

  It is well-known and straightforward to check that $(F_\alpha, G_\alpha)$
  is an adjoint pair. As observed above, if $V'$ is an $\CH_\alpha$-module,
  then $G_\alpha(F_\alpha (V')) =e_{\chi_\alpha} \big(\CH e_{\chi_\alpha}
  \otimes_{\CH_\alpha} V' \big)$ is naturally isomorphic to $V'$ and so
  $G_\alpha F_\alpha $ is naturally equivalent to the identity functor. To
  complete the proof it is enough to show that if $V$ is an $\CH$-module in
  $\CO_\alpha$ then the natural map
  \[
  \gamma\colon \CH e_{\chi_\alpha}\otimes_{\CH_\alpha} V_{\chi_\alpha}\to
  V\quad\text{with}\quad \gamma(h\otimes v)=hv
  \]
  is an $\CH$-module isomorphism. The map $\gamma$ is obviously $\CH$-linear
  and so it is enough to show that it is an $A$-module isomorphism.

  Suppose $w\in W$, $\chi\in X(D)$ with $\chi \ne \chi_\alpha$, and
  $v\in V_{\chi_\alpha}$. Then
  \[
  t_we_\chi\otimes v= t_we_\chi\otimes e_{\chi_\alpha}v =t_we_\chi
  e_{\chi_\alpha} \otimes v=0.
  \]
  Because the set $\{\, t_we_\chi\mid w\in W,\ \chi\in X(D)\,\}$ is an
  $A$-basis of $\CH$ it follows that the set $\{\, t_w\otimes v\mid w\in W,\
  v\in V_{\chi_\alpha}\,\}$ spans $F_\alpha G_\alpha(V)$. Using
  \cref{thm:rel} we have
  \[
  t_d(t_w\otimes v)= t_w t_{w\inverse dw} \otimes v= t_w t_{w\inverse dw}
  e_{\chi_\alpha} \otimes v = (w\cdot \chi_\alpha)(d)\, (t_w \otimes v),
  \]
  and so
  $F_\alpha G_\alpha(V)_{w\cdot \chi_\alpha}= t_w\otimes V_{\chi_\alpha}$.
  Moreover, if $w\cdot \chi_\alpha=\chi_\alpha$, then
  $t_we_{\chi_\alpha}= e_{\chi_\alpha} t_w e_{\chi_\alpha}\in \CH_\alpha$ by
  \cref{ssec:ciw}\cref{eq:we1}, and so
  $t_w\otimes v= 1\otimes t_w e_{\chi_\alpha} v\in 1\otimes
  V_{\chi_\alpha}$.  It follows that
  \[
  F_\alpha G_\alpha(V)_{\chi_\alpha}= \sum_{w\cdot \chi_\alpha=\chi_\alpha}
  t_w\otimes V_{\chi_\alpha} = 1\otimes V_{\chi_\alpha}.
  \]
  The restriction of $\gamma$ to $1\otimes V_{\chi_\alpha}$ is obviously an
  isomorphism onto $V_{\chi_\alpha}$. Because $F_\alpha G_\alpha(V)$ and $V$
  are in $\CO_\alpha$ and left multiplication by the elements $t_w$ for
  $w\in W$ transitively permutes the non-zero weight spaces, the
  multiplication map $\gamma$ carries $F_\alpha G_\alpha(V)_\chi$
  isomorphically to $V_\chi$ for all $\chi\in X(D)$. This implies that
  $\gamma$ is an isomorphism.
\end{proof}

It is not hard to see that $\CO_\alpha$ is naturally isomorphic to the
category of $\CH e_\alpha$-modules and so it follows from the theorem that
$\CH e_\alpha$ and $\CH_\alpha$ are Morita equivalent. Jacon and Poulain
d'Andecy prove a more explicit result in
\cite{jaconpoulaindandecy:isomorphism}. Using the choice of orbit
representatives in \cref{ssec:oreps}, their arguments can be easily adapted
to construct an explicit isomorphism $\CH e_\alpha \cong
M_{r_\alpha}(\CH_\alpha)$, where $r_\alpha=|\alpha|$.

\section{Construction of \texorpdfstring{$\mathbf{\CH}$}
  {H}-modules}\label{sec:module}

The block decomposition in \cref{ssec:bk}\cref{eq:bk} and \cref{thm:eqv}
reduce the study of $\CH$-modules to the study of $\CH_\alpha$-modules. It
is shown below that $\CH_\alpha$ is isomorphic to a tensor product of
Iwahori-Hecke algebras of type $A$. Irreducible representations for
Iwahori-Hecke algebras of type $A$ have been constructed by various
authors. In this paper we use Hoefsmit's construction to define a family of
$\CH_\alpha$-modules. Inducing these modules to $\CH$ we obtain a family of
$\CH$-modules that gives rise to a complete set of irreducible
${}_f\CH$-modules, for any specialization $f\colon A\to k$ with the property
that ${}_f\CH_\alpha$ is semisimple for all $\alpha$. A particular case is
the inclusion of $A$ in its quotient field $K$. In this case, ${}_K\CH$ is a
split semisimple $K$ algebra and we obtain a complete set of irreducible
${}_K\CH$-modules, each of which is absolutely irreducible.

This section is organized as follows. First, the combinatorial constructions
we use are collected in \cref{ssec:oreps}--\cref{ssec:sum}. This is followed
in \cref{ssec:hm} by a review of Hoefsmit's construction in the form used
later. The structure of $\CH_\alpha$ is described and the
$\CH_\alpha$-modules we need are constructed in
\cref{ssec:Ha}--\cref{ssec:vlam}. With these $\CH_\alpha$-modules in hand, we
can state and prove \cref{thm:irrep}, the main result in this
section. Finally, consequences for semisimple specializations of $\CH$ are
given in \cref{cor:sp} and \cref{cor:K}.

\subsection{Pseudo-compositions and orbit
  representatives} \label{ssec:oreps} Because $D \cong \mu_b^n$, we have
$X(D)\cong X(\mu_b^n)\cong X(\mu_b)^n$ and so we may use $[b]^n$
to index the characters of $D$. We make this identification precise and
choose the usual orbit representatives for the $W$ action as follows.

For $(i_1, \dots, i_n)\in [b]^n$ define
\[ 
\chi_{(i_1, \dots, i_n)} \colon D\to A\quad\text{by}\quad
\chi_{(i_1, \dots, i_n)}\left(\begin{bmatrix} \zeta^{p_1}&& \\ &\ddots& \\
    &&\zeta^{p_n} \end{bmatrix}\right) = \zeta^{i_1p_1+\dotsm +i_np_n} .
\]
Then $X(D)= \{\, \chi_{\ibar} \mid \ibar \in [b]^n\,\}$.  For example,
$\chi_{(1, \dots, 1)}$ is the determinant character and $\chi_{(b, \dots,
  b)}$ is the trivial character.  Notice that
\begin{equation}\label{eq:te}
  T_1^{p_1} \dots T_n^{p_n} e_{\chi_{\ibar}} = \zeta^{i_1p_1+\dotsm
    +i_np_n} e_{\chi_{\ibar}}.
\end{equation}

When considered as a permutation group, $W$ acts on $[b]^n$ on the right by
place permutation: for $w\in W$ and $\ibar=(i_1, \dots, i_n)\in [b]^n$,
$\ibar\cdot w= (i_{w(1)}, \dots, i_{w(n)})$. The proof of the next lemma is
straightforward and is omitted.

\begin{lemma}\label{lem:wact}
  Suppose $w\in W$ and $\ibar\in [b]^n$. Then
  \[
  w\cdot \chi_{\ibar} = \chi_{\ibar\cdot w\inverse}.
  \]
\end{lemma}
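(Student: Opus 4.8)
The plan is to evaluate both characters on an arbitrary element $d\in D$ and compare the resulting scalars in $A^\times$; since a character is determined by its values, this suffices. Write $d = \operatorname{diag}(\zeta^{p_1}, \dots, \zeta^{p_n})$. By the definition of the $W$-action on $X(D)$ recorded in \cref{ssec:ciw}, we have $(w\cdot \chi_{\ibar})(d) = \chi_{\ibar}(w\inverse d w)$, so the first task is to identify the conjugate $w\inverse d w$ as an element of $D$.

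First I would compute $w\inverse d w$ using only the defining identity $wv_i = v_{w(i)}$ and the fact that $dv_i = \zeta^{p_i}v_i$. Applying $w\inverse d w$ to $v_i$ gives $w\inverse d\, v_{w(i)} = \zeta^{p_{w(i)}}\, w\inverse v_{w(i)} = \zeta^{p_{w(i)}} v_i$, so that $w\inverse d w = \operatorname{diag}(\zeta^{p_{w(1)}}, \dots, \zeta^{p_{w(n)}})$. Substituting this into the definition of $\chi_{\ibar}$ in \cref{ssec:oreps} yields $(w\cdot \chi_{\ibar})(d) = \zeta^{\sum_{k=1}^n i_k p_{w(k)}}$.

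Next I would compute the right-hand side directly. By definition $\ibar\cdot w\inverse = (i_{w\inverse(1)}, \dots, i_{w\inverse(n)})$, so $\chi_{\ibar\cdot w\inverse}(d) = \zeta^{\sum_{j=1}^n i_{w\inverse(j)} p_j}$. The two exponents agree after the substitution $j = w(k)$, equivalently $k = w\inverse(j)$: as $k$ ranges over $[n]$ so does $j$, and each term transforms as $i_k p_{w(k)} = i_{w\inverse(j)} p_j$. Since $d\in D$ was arbitrary, this gives $w\cdot \chi_{\ibar} = \chi_{\ibar\cdot w\inverse}$.

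There is essentially no obstacle here beyond careful index bookkeeping; the only point demanding attention is keeping straight which permutation appears in which slot. The appearance of $w\inverse$ on the right is forced precisely by the conjugation in the left action: conjugating $d$ by $w$ permutes its diagonal exponents by $w$, and after pairing these exponents against the \emph{fixed} tuple $\ibar$ and reindexing, the effect transfers to a place permutation of $\ibar$ by $w\inverse$. This is exactly the expected compatibility between the left $W$-action on $X(D)$ and the right place-permutation action on $[b]^n$ under the identification $X(D) = \{\,\chi_{\ibar}\mid \ibar\in[b]^n\,\}$.
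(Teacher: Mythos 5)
Your proof is correct: the paper omits the argument as straightforward, and your computation (conjugating $d$ by $w$ to permute the diagonal exponents, then reindexing the sum $\sum_k i_k p_{w(k)} = \sum_j i_{w\inverse(j)} p_j$) is exactly the verification intended. The index bookkeeping is handled correctly, including the point that the left conjugation action transfers to a right place-permutation by $w\inverse$ on the tuple $\ibar$.
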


It follows from the lemma that $\chi_{\ibar}$ and $\chi_{\jbar}$ lie in the
same $W$-orbit if and only if $\jbar$ can be obtained from $\ibar$ by
permuting the entries.

\subsection{}
A pseudo-composition of $n$ with $b$ parts, sometimes called a
$b$-composition of $n$, is a $b$-tuple $(m_1, \dots, m_b)$ of non-negative
integers such that $m_1+\dotsb +m_b=n$. Let $\compnb nb$ denote the set of
pseudo-compositions of $n$ with $b$ parts.

Define
\[
\pi\colon [b]^n\to \compnb nb \quad\text{by} \quad \pi(i_1, \dots, i_n)=
(m_1, \dots, m_b),
\]
where for $j \in[b]$, $m_j=|\{\, l\mid i_l=j\,\}|$ is the multiplicity of
$j$ in the tuple $(i_1, \dots, i_n)$. Then $\pi$ is an orbit map for the
action of $W$ on $[b]^n$ and $\chi_{\ibar}$ and $\chi_{\jbar}$ lie in the
same $W$-orbit if and only if $\pi(\ibar)= \pi(\jbar)$. Abusing notation
slightly, define
\[
\pi\colon X(D)\to \compnb nb \quad \text{by}\quad \pi(\chi_{\ibar}) =
\pi(\ibar).
\]
Then, $\chi$ and $\chi'$ lie in the same $W$-orbit if and only if
$\pi(\chi)= \pi(\chi')$. From now on we use the map $\pi$ to identify the
set $\CC$ of $W$-orbits in $X(D)$ with the set $\compnb nb$.

For $\alpha=(m_1, \dots, m_b)\in \compnb nb$ define
\[
\ibar_\alpha=( \underbrace{1, \dots, 1}_{m_1}, \dots, \underbrace{b, \dots,
  b}_{m_b} )\in [b]^n\quad \text{and}\quad \chi_\alpha=
\chi_{\ibar_\alpha}\in X(D).
\]
For example, if $\alpha = (0,2,3,2) \in \compnb 74$, then $\ibar_\alpha =
(2,2,3,3,3,4,4)\in [4]^7$, and if $\beta=(0,0,0,7) \in \compnb 74$, then
$\chi_\beta$ is the trivial character. It is easy to see that $\{\,
\ibar_\alpha\mid \alpha\in \compnb nb\,\}$ is the set of all non-decreasing
tuples in $[b]^n$ and is a complete set of orbit representatives for the
action of $W$ on $[b]^n$. Thus, $\{\, \chi_\alpha\mid \alpha \in \compnb
nb\,\}$ is a complete set of orbit representatives in $X(D)$.

\subsection{Multipartitions and tableaux}\label{ssec:mt}
A partition of $n$ is a tuple $\lambda=(n_1, \dots, n_p)$ of positive
integers such that $n_1\geq \dotsm \geq n_p$ and $n=n_1+\dotsb+n_p$. The
integers $n_i$ are the parts of $\lambda$ and $|\lambda|=n$ is the size of
$\lambda$. By definition, the empty tuple is a partition of $0$ called the
empty partition and denoted by $\emptyset$. We have $|\emptyset|=0$. A
partition is a partition of a non-negative integer.

A $b$-partition of $n$, sometimes called a $b$-multipartition of $n$, is a
$b$-tuple $\lambda= (\lambda^1, \dots, \lambda^b)$ of partitions, some of
which could be the empty partition, such that
$|\lambda^1|+ \dotsm +|\lambda^b| = n$. Let $\bpart bn$ denote the set of
$b$-partitions of $n$.

For $\lambda= (\lambda^1, \dots, \lambda^b)$, a $b$-partition of $n$, define
\[
|\lambda| =(|\lambda^1|, \dots,|\lambda^b|).
\]
Then clearly $|\lambda|\in \compnb nb$. Note that a $b$-partition of $n$ is
given by a pseudo-composition of $n$ with $b$ parts, say $\alpha=(m_1,
\dots, m_b)$, together with a partition of $m_i$ for each $i\in[b]$.

A partition of $n$ may be visualized as a Young diagram and we frequently
identify a partition with its corresponding Young diagram without comment.
Suppose $\lambda = (\lambda^1, \dots, \lambda^b)\in \bpart bn$.  The Young
diagram of $\lambda$ is the $b$-tuple of Young diagrams whose $i\th$ entry
is the Young diagram of $\lambda^i$. 

A Young $b$-tableau with shape $\lambda$ is a bijection between
$\{1, \dots, n\}$ and the boxes in the Young diagram of $\lambda$. Young
$b$-tableaux with shape $\lambda$ are usually visualized by filling in the
boxes in the Young diagram of $\lambda$ with the integers $1$, \dots,
$n$. Let $\YT^\lambda$ denote the set of Young $b$-tableaux with shape
$\lambda$. If $\tau =(\tau^1, \dots, \tau^b)$ is a Young $b$-tableau with
shape $\lambda$, then $\tau$ is standard if the entries in each row and
column of $\tau^i$ are increasing for each $i \in[b]$. Let $\SYT^\lambda$
denote the set of standard Young $b$-tableaux with shape $\lambda$.

For example, if
\[
\ytableausetup{smalltableaux, centertableaux} \lambda = ((2,1), \emptyset,
(3,2), (1,1))= \left(\, \ydiagram{2,1}, \emptyset, \ydiagram{3,2},
  \ydiagram{1,1}\,\right)\in \bpart 4{10},
\]
then
\[
|\lambda|=(3,0,5,2)\in \compnb {10}4 \quad\text{and}\quad \tau =\left(\,
  \begin{ytableau} 3&7\\9 \end{ytableau}, \emptyset,
  \begin{ytableau} 2 & 5 & 10\\ 4&8 \end{ytableau},
  \begin{ytableau} 1\\6\end{ytableau} \,\right) \in \SYT^\lambda.
\]

Suppose $\lambda=(\lambda^1, \dots, \lambda^b)\in \bpart bn$ and $\tau=
(\tau^1, \dots, \tau^b)\in \YT^\lambda$. For $i\in [n]$, define $\tau_i =j$
if $i$ appears in $\tau^j$. In other words, $\tau_i=j$ if the box containing
$i$ in the Young diagram of $\lambda$ is in $\lambda^j$. By construction,
the tuple $(\tau_1, \dots, \tau_n)$ is in $[b]^n$. Define
\[
\xi\colon \YT \to [b]^n\quad\text{by}\quad \xi(\tau)= (\tau_1, \dots,
\tau_n).
\]
Continuing the example above we have
\[
\xi \left(\left(\,
    \begin{ytableau} 3&7\\9 \end{ytableau}, \emptyset,
    \begin{ytableau} 2 & 5 & 10\\ 4&8 \end{ytableau},
    \begin{ytableau} 1\\6\end{ytableau} \,\right) \right)
=(4,3,1,3,3,4,1,3,1,3 ) \in [4]^{10} .
\]

Next, define
\[
\YT^\lambda_0=\{\, \tau\in \YT^\lambda \mid \tau_1\leq \dots \leq \tau_n
\,\} \quad \text{and} \quad \SYT^\lambda_0=\YT^\lambda_0\cap \SYT.
\]
Thus, if $|\lambda|=(m_1, \dots, m_b)$, then $\tau=(\tau^1, \dots,
\tau^b)\in \YT^\lambda_0$ if and only if
\[
\tau_1= \dots= \tau_{m_1} =1,\ \tau_{m_1+1}= \dots= \tau_{m_1+m_2} =
2,\text{ and so on}.
\]
For example,
\[
\left(\, \begin{ytableau} 1&3\\2 \end{ytableau}, \emptyset,
  \begin{ytableau} 4 & 6 & 7\\ 5&8 \end{ytableau},
  \begin{ytableau} 9\\10\end{ytableau} \,\right) \in \SYT^\lambda_0,
\]
where as above $\lambda= ((2,1), \emptyset, (3,2), (1,1))$.

As a matter of convention, we identify $1$-partitions with partitions and
$1$-tableaux with tableaux. For example, $\CP(m)=\bpart 1m$ is the set of
partitions of $m$, and if $\hat \lambda\in \CP(m)$, then
$\SYT^{\hat \lambda}$ denotes the set of standard Young tableaux with shape
$\hat \lambda$.

\subsection{}\label{ssec:wat}
Suppose $\lambda\in \bpart bn$. The group $W$ acts on $\YT^\lambda$ in the
obvious way: for $w\in W$ and $\tau\in \YT^\lambda$, $w \cdot \tau$ is the
Young $b$-tableau obtained by applying the permutation $w$ to the entries of
$\tau$. Define
\[
\chi_\tau = \chi_{\xi(\tau)}\in X(D).
\]

\begin{lemma}\label{lem:wt}
  Suppose $\tau\in \YT^\lambda$. Then $w\cdot \chi_\tau = \chi_{w\cdot
    \tau}$.
\end{lemma}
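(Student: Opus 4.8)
The plan is to unwind the two occurrences of $\chi$ in the statement and reduce everything to \cref{lem:wact}. By the definition in \cref{ssec:wat}, $\chi_\tau = \chi_{\xi(\tau)}$ and $\chi_{w\cdot\tau} = \chi_{\xi(w\cdot\tau)}$. Applying \cref{lem:wact} to the left-hand side of the claim gives $w\cdot\chi_\tau = w\cdot\chi_{\xi(\tau)} = \chi_{\xi(\tau)\cdot w\inverse}$, so the lemma reduces to the purely combinatorial identity
\[
\xi(w\cdot\tau) = \xi(\tau)\cdot w\inverse
\]
in $[b]^n$, where the right-hand side uses the place-permutation action $\ibar\cdot w\inverse = (i_{w\inverse(1)}, \dots, i_{w\inverse(n)})$ from \cref{ssec:oreps}.

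First I would track how applying $w$ to the entries of $\tau$ affects the index function $\xi$. Write $\sigma = w\cdot\tau$. The shape $\lambda$ is fixed by the $W$-action, so a given box lies in the component $\lambda^j$ for $\sigma$ exactly when it does for $\tau$; only the integers filling the boxes change. Since $\sigma$ is obtained from $\tau$ by replacing each entry $k$ with $w(k)$, the entry $i$ occupies in $\sigma$ the box that held $w\inverse(i)$ in $\tau$. Hence the component of $\sigma$ containing $i$ equals the component of $\tau$ containing $w\inverse(i)$; that is, $\sigma_i = \tau_{w\inverse(i)}$ for every $i\in[n]$. Reading off coordinates, $\xi(w\cdot\tau) = (\tau_{w\inverse(1)}, \dots, \tau_{w\inverse(n)})$, which is exactly $\xi(\tau)\cdot w\inverse$.

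Combining these, $w\cdot\chi_\tau = \chi_{\xi(\tau)\cdot w\inverse} = \chi_{\xi(w\cdot\tau)} = \chi_{w\cdot\tau}$, as claimed. The only point that requires care is the bookkeeping of inverses: one must confirm that applying $w$ to the \emph{entries} of a tableau corresponds, on the level of $\xi$, to the right place-permutation by $w\inverse$ rather than by $w$, so that this inverse matches the one already present in \cref{lem:wact}. Once the pointwise identity $\sigma_i = \tau_{w\inverse(i)}$ is verified, no further computation is needed and the proof is complete.
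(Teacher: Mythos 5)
Your proposal is correct and follows essentially the same route as the paper: both reduce the claim to the pointwise identity $(w\cdot\tau)_j = \tau_{w\inverse(j)}$ (equivalently $\xi(w\cdot\tau)=\xi(\tau)\cdot w\inverse$) and then invoke \cref{lem:wact}. The care you take with the placement of the inverse is exactly the content of the paper's one-line verification, so nothing is missing.
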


\begin{proof}
  Note that the box that contains $w\inverse(j)$ in $\tau$ is the same as
  the box that contains $j$ in $w\cdot \tau$, so
  $\tau_{w\inverse(j)} = (w\cdot \tau)_j$. Therefore
  \[
  w\cdot \chi_\tau= w\cdot \chi_{(\tau_1, \dots, \tau_n)} =
  \chi_{({\tau_{w\inverse(1)}, \dots, \tau_{w\inverse(n)}})} = \chi_{w\cdot
    \tau}
  \]
  by \cref{lem:wact}.
\end{proof}

\subsection{}\label{ssec:sum}
To summarize, given $\lambda\in \bpart bn$, $\tau\in \YT^\lambda$, and
$\alpha\in \compnb nb$, we have characters
\[
\chi_\tau= \chi_{\xi(\tau)}, \quad \chi_\alpha= \chi_{\ibar_\alpha},
\quad\text{and}\quad \chi_{|\lambda|} \quad \text{in}\quad X(D).
\]
It is easy to see that $\pi(\xi(\tau))= |\lambda|$ and so $\chi_\tau$ is in
the $W$-orbit of $\chi_{|\lambda|}$. In addition, if $\tau_0\in
\YT^\lambda_0$ and $|\lambda|= \alpha$, then $\xi(\tau_0)= \ibar_\alpha$ and
\begin{equation}
  \label{eq:sum}
  \chi_{\tau_0}= \chi_{|\lambda|}= \chi_\alpha.
\end{equation}

\subsection{A factorization of \texorpdfstring{$\mathbf
    b$}{b}-tableaux}\label{ssec:wa} 
Now suppose $\alpha=(m_1, \dots, m_b)\in \compnb nb$. Define $\mbar_0=0$,
and for $j\in [b]$ define $\mbar_j= m_1+\dots +m_j$ and $M_j= [\mbar_j]
\setminus [\mbar_{j-1}]$.  Define the Young subgroup $W_{\alpha}$ of $W$ by
\[
W_{\alpha} =\{\, w\in W \mid \forall\, j\in [b],\ w(M_j)= M_j\,\}.
\]
It is easy to see that $W_\alpha$ is the stabilizer of $\chi_\alpha$ in $W$
and that $W_{\alpha} \cong \Sigma_{m_1}\times \dotsm \times \Sigma_{m_b}$,
where $\Sigma_0$ is understood to be the trivial group. Let $W^{\alpha}$
denote the set of minimal length left coset representatives of $W_{\alpha}$
in $W$. Considering $w\in W$ as a permutation, that is, as a bijection
$w\colon [n] \to [n]$, one may consider the restriction of $w$ to each $M_j$
as a function $w|_{M_j} \colon M_j\to [n]$, and then
\[
W^\alpha=\{\, w\in W\mid \text{$\forall\, j\in[b]$, $w|_{M_j}$ is
  increasing}\,\}.
\]
It is well-known that the multiplication map $W^{\alpha} \times
W_{\alpha}\to W$ is a bijection.

Suppose $\lambda=(\lambda^1, \dots, \lambda^b)\in \bpart bn$. Then
$|\lambda|\in \compnb nb$ and so $W_{|\lambda|}$ and $W^{|\lambda|}$ are
defined.  Let $\varphi\colon W^{|\lambda|} \times \SYT^\lambda_0\to
\YT^\lambda$ be the action map defined by $\varphi(w, \tau_0)= w\cdot
\tau_0$.

\begin{proposition}\label{pro:factab}
  The mapping $\varphi\colon W^{|\lambda|} \times \SYT^\lambda_0\to
  \YT^\lambda$ is injective with image equal to $\SYT^\lambda$ and so the
  $W$-action on $\YT^\lambda$ induces a bijection between $W^{|\lambda|}
  \times \SYT^\lambda_0$ and $\SYT^\lambda$. Therefore, given $\tau\in
  \SYT^\lambda$, there is a unique $w\in W^{|\lambda|}$ and a unique
  $\tau_0\in \SYT^\lambda_0$ such that $\tau =w \cdot \tau_0$.
\end{proposition}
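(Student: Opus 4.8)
The plan is to exhibit an explicit two-sided inverse to $\varphi$ on $\SYT^\lambda$, after first verifying that the image of $\varphi$ lands inside $\SYT^\lambda$. Write $\alpha=|\lambda|=(m_1, \dots, m_b)$, so that by the description in \cref{ssec:mt} an element $\tau_0\in \SYT^\lambda_0$ is precisely a standard $b$-tableau whose component $\lambda^j$ is filled with the entry set $M_j$ for each $j\in[b]$. The one elementary fact I would use repeatedly is that applying an increasing bijection to the entries of a standard (ordinary) tableau again produces a standard tableau, since standardness depends only on the relative order of entries along rows and columns.

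First I would show $\varphi\big(W^{|\lambda|} \times \SYT^\lambda_0\big)\subseteq \SYT^\lambda$. Fix $w\in W^{|\lambda|}$ and $\tau_0\in \SYT^\lambda_0$. In $\tau_0$ the component $\lambda^j$ is a standard tableau with entry set $M_j$, and by the characterization of $W^\alpha$ in \cref{ssec:wa} the restriction $w|_{M_j}\colon M_j\to w(M_j)$ is increasing. Hence the $j$-th component of $w\cdot \tau_0$ is obtained from $\tau_0^j$ by applying an increasing bijection and is therefore standard, so $w\cdot \tau_0\in \SYT^\lambda$.

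Next I would construct the inverse. Given $\tau\in \SYT^\lambda$, let $S_j=\{\, k\mid \tau_k=j\,\}$ be the set of integers occurring in the component $\lambda^j$; then $|S_j|=m_j$ and $[n]=\bigsqcup_{j} S_j$. Define $w\in W$ to be the unique permutation whose restriction $w|_{M_j}$ is the increasing bijection $M_j\to S_j$ for every $j$; by the characterization of $W^\alpha$ this $w$ lies in $W^{|\lambda|}$. Setting $\tau_0=w\inverse\cdot \tau$, the increasing bijection $w\inverse|_{S_j}\colon S_j\to M_j$ carries the standard component $\tau^j$ to a standard tableau with entry set $M_j$, so $\tau_0\in \SYT^\lambda_0$ and $\varphi(w,\tau_0)=\tau$. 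This gives surjectivity onto $\SYT^\lambda$.

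Finally, for injectivity I would observe that in any factorization $\tau=w\cdot \tau_0$ the element $w$ is forced by $\tau$: the sets $w(M_j)$ of entries appearing in the components of $\tau$ are exactly the $S_j$ above, and these are encoded in $\xi(\tau)$ (equivalently in $\chi_\tau$ via \cref{lem:wt} and \cref{lem:wact}), hence independent of the factorization. Since every $w\in W^{|\lambda|}$ must restrict to the increasing bijection $M_j\to w(M_j)$ on each $M_j$, two such elements with the same block collection $\{w(M_j)\}_j$ coincide; thus $w$ is determined, and then $\tau_0=w\inverse\cdot \tau$ is determined as well. I expect the only delicate point to be the bookkeeping between the two actions in play—the place-permutation action on $[b]^n$ governing $\xi$, which by \cref{lem:wact} involves $w\inverse$, versus the action on tableau entries—and applying the order-preservation argument to the correct bijections $M_j\to S_j$ and their inverses.
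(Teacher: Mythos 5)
Your proof is correct and follows essentially the same route as the paper: the containment of the image in $\SYT^\lambda$ and the surjectivity construction (building $w$ as the increasing bijection $M_j\to S_j$ and setting $\tau_0=w\inverse\cdot\tau$) are the paper's argument verbatim. The only cosmetic difference is in injectivity, where you determine $w$ directly from the entry sets of the components of $\tau$, while the paper argues that $w\inverse w'$ stabilizes each $M_j$ and hence lies in $W_{|\lambda|}\cap W^{|\lambda|}=\{1\}$; the two are equivalent.
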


\begin{proof}
  Suppose $|\lambda|= (m_1, \dots, m_b)$ and that for $j\in [b]$, $\mbar_j$
  and $M_j$ are defined as in \cref{ssec:wa}. Notice that $\tau=(\tau^1,
  \dots, \tau^b)\in \YT^\lambda_0$ if and only if for all $j\in [b]$, $M_j$
  is the set of entries of $\tau^j$.

  To show that $\varphi$ is injective, suppose $w, w'\in W^{|\lambda|}$,
  $\tau_0, \tau_0'\in \SYT^\lambda_0$, and
  $w\cdot \tau_0 = w'\cdot \tau_0'$. Then
  $w\inverse w'\cdot \tau_0'= \tau_0$, so $w\inverse w'(M_j)=M_j$ for all
  $j\in [b]$. But then $w\inverse w'\in W_{|\lambda|}$, so $w=w'$, which
  implies that $\tau_0=\tau_0'$ as well.

  Because $w\in W^{|\lambda|}$ if and only if the restriction of $w$ to
  $M_j$ is an increasing function (when $w$ is considered as a permutation),
  and a standard Young $b$-tableau $\tau_0=(\tau_0^1, \dots, \tau_0^b)$ with
  shape $\lambda$ is in $\SYT^\lambda_0$ if and only if $M_j$ is the set of
  entries of $\tau_0^j$ for $j\in [b]$, it is clear that if $w\in
  W^{|\lambda|}$ and $\tau_0\in \SYT^\lambda_0$, then $w\cdot \tau_0$ is
  standard. Thus, the image of $\varphi$ is contained in $\SYT^\lambda$.

  To show that the image of $\varphi$ is equal to $\SYT^\lambda$, suppose
  $\tau=(\tau^1, \dots, \tau^b)\in \SYT^\lambda$. For $j\in[b]$ let $N_j =
  \{i^j_1, \dots, i^j_{m_j}\}$ be the set of entries of $\tau^j$, where
  $i^j_1< \dotsm< i^j_{m_j}$. Define a permutation $w\in W$ by
  \[
  w(\mbar_{j-1}+l)= i^j_l \quad\text{for}\quad j\in[b] \quad\text{and}\quad
  l\in[m_j] .
  \]
  Then $w\in W^{|\lambda|}$. Moreover, $w\inverse(N_j)=M_j$ and the
  restriction of $w\inverse$ to $N_j$ is increasing for all $j\in [b]$. This
  implies that $w\inverse\cdot \tau \in \YT^\lambda_0$ and $w\inverse\cdot
  \tau$ is standard, so $w\inverse\cdot \tau\in \SYT^\lambda_0$. Clearly
  $\varphi(w, w\inverse \cdot \tau)=\tau$ and hence the image of $\varphi$
  is equal to $\SYT^\lambda$.
\end{proof}

\subsection{Hoefsmit's modules for Iwahori-Hecke algebras of type
  \texorpdfstring{$\mathbf A$}{A}}\label{ssec:hm}
Suppose $m>1$ is an integer.  The Iwahori-Hecke algebra of type $A_{m-1}$
over $A$ with parameters $\bq$ and $b\ba$ is the unital $A$-algebra
$\CI^m$ with generators
\[
\text{$S_1$, \dots, $S_{m-1}$}
\]
and relations 
\begin{alignat*}{2}
  &S_{i}S_{j}=S_{j}S_{i} \qquad&& \text{for $i,j\in[ m-1]$ with
    $|i-j|>1$,} \\
  &S_{i}S_{i+1}S_{i}= S_{i+1}S_{i}S_{i+1} \qquad&& \text{for $i\in[m-2]$,
    and} \\ 
  &S_{i}^2=\bq +b\ba S_i \qquad&& \text{for $i\in[m-1]$.}
\end{alignat*} 
It is shown in \cite[\S IV.2 Exercise 24]{bourbaki:groupes} that $\dim \CI^m
=m!$ and that $\CI^m$ has an $A$-basis $\{\, S_\sigma\mid \sigma\in
\Sigma_m\,\}$ such that if $s_j$ is the transposition $( j\ j+1)$ in
$\Sigma_{m}$, then $S_{s_j}=S_j$ and
\[
S_{s_j} S_\sigma= \begin{cases} S_{s_j\sigma}
  &\text{if $\ell(s_j\sigma) =  \ell(\sigma) +1$} \\
  \bq S_{s_j\sigma} + b\ba S_{\sigma} &\text{if $\ell(s_j\sigma) =
    \ell(\sigma) -1$.}\end{cases}
\]
Define $\CI^0=\CI^1 =A$.

Recall the element $x\in A$ defined in \cref{ssec:AH} and that
$\sqrt{b \ba^2+4\bq}$ is a fixed square root of $b^2\ba^2+4\bq$. Define
\[
y= \frac {b\ba+\sqrt{b^2\ba^2+4\bq}} {2\bq} .
\]
Then $x=\bq y^2= b\ba y+1$ and $y$ is a unit in $A$ with
$y\inverse= \big( b\ba-\sqrt{b^2\ba^2+4\bq} \big)/2$. Notice that if
$\Shat_l\in \CI^m$ is defined by $\Shat_l=yS_l$, then
$\Shat_{l}^2=x 1+(x-1) \Shat_l$.

If $\hat \tau$ is a Young $1$-tableau and $i$ and $j$ are entries in
$\hat \tau$, then the axial distance from $i$ to $j$ in $\hat \tau$ is
\[
\delta_{\tau}(i,j)= (c_j- r_j) -(c_{i}-r_{i}),
\]
when the box in $\hat \tau$ that contains $i$ is in row $r_i$ and column
$c_i$, and similarly for $j$. For example, if $\tau= \begin{ytableau} 2 & 5
  & 10\\ 4&8 \end{ytableau}$, then $\delta_{\tau}(4,10)= (3- 1) -(1-2)=3=
-\delta_{\tau}(10,4)$.

For a partition $\hat \lambda$ of $m$, let $P^{\hat \lambda}$ be a free
$A$-module with basis indexed by $\SYT^{\hat \lambda}$, the set of standard
Young tableaux with shape $\hat \lambda$. Say
$\{\, p_{\hat \tau}\mid \hat \tau\in \SYT^{\hat \lambda}\,\}$ is an
$A$-basis of $P^{\hat \lambda}$. Hoefsmit \cite{hoefsmit:representations}
has shown that there is an $\CI^{m}$-module structure on $P^{\hat \lambda}$
such that for $l\in[m-1]$ and $\hat \tau\in \SYT^{\hat \lambda}$,
\begin{equation}
  \label{eq:iab}
  S_l \cdot p_{\hat \tau}= \frac {x-1}{y(1-x^k)}\, p_{\hat \tau} +  
  \frac {x-x^k}{y(1-x^k)} \, p_{s_l\cdot \hat \tau},  
\end{equation}
where $k= \delta_{\hat \tau}(l+1,l)$ and $p_{s_l\cdot \hat \tau}$ is taken
to be zero if $s_l\cdot \hat \tau$ is not standard.

\subsection{The algebra \texorpdfstring{$\mathbf{\CH_{\alpha}}$}{Ha} and
  \texorpdfstring{$\mathbf{\CH_{\alpha}}$} {Ha}-modules}\label{ssec:Ha}
We now take up the structure of the algebra $\CH_\alpha$ and the
construction of $\CH_\alpha$-modules. From this subsection until the
statement of \cref{thm:irrep}, $\alpha=(m_1, \dots, m_b)\in \compnb nb$
denotes a fixed pseudo-composition of $n$ with $b$ parts.

\begin{lemma}\label{lem:alba} 
  Consider the subalgebra $\CH_\alpha$ of $\CH$ and the right
  $\CH_\alpha$-module $\CH e_{\chi_\alpha}$.
  \begin{enumerate}
  \item $\{\, t_w e_{\chi_\alpha} \mid w\in W_{\alpha}\,\}$ is an $A$-basis
    of $\CH_{\alpha}$ and so $\CH_\alpha$ is a free $A$-module with rank
    $|W_\alpha|$. \label[equation]{i:1}
  \item $\{\, t_w e_{\chi_\alpha} \mid w\in W^\alpha\,\}$ is an
    $\CH_\alpha$-basis of $\CH e_{\chi_\alpha}$ and so $\CH e_{\chi_\alpha}$
    is a free $\CH_\alpha$-module with rank
    $|W^\alpha|$. \label[equation]{i:2}
  \end{enumerate}
\end{lemma}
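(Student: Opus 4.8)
The plan is to prove both statements by using the basis $\{t_we_\chi \mid w\in W,\ \chi\in X(D)\}$ of $\CH$ together with the commutation rule $t_we_\chi = e_{w\cdot\chi}t_w$ from \cref{ssec:ciw}\cref{eq:we1} and the multiplication rules in \cref{thm:rel}. The key structural fact to exploit is that $W_\alpha$ is exactly the stabilizer of $\chi_\alpha$ in $W$ (noted in \cref{ssec:wa}) and that $W^\alpha$ is a set of left coset representatives with $W^\alpha\times W_\alpha\to W$ a bijection.

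First I would establish part \cref{i:1}. Since $\CH = \bigoplus_{w\in W,\ \chi\in X(D)} A\, t_we_\chi$ and $\CH_\alpha = e_{\chi_\alpha}\CH e_{\chi_\alpha}$, I compute $e_{\chi_\alpha}(t_we_\chi)e_{\chi_\alpha}$ for a general basis element. Using \cref{eq:we1} we have $e_{\chi_\alpha}t_we_\chi = e_{\chi_\alpha}e_{w\cdot\chi}t_w$, and since the $e_\chi$ are orthogonal idempotents this is zero unless $w\cdot\chi = \chi_\alpha$; multiplying on the right by $e_{\chi_\alpha}$ forces $\chi = \chi_\alpha$ as well. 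Combining these two conditions gives $w\cdot\chi_\alpha = \chi_\alpha$, i.e.\ $w\in W_\alpha$. Thus the nonzero images are precisely $t_we_{\chi_\alpha}$ for $w\in W_\alpha$ (after absorbing $e_{\chi_\alpha}t_we_{\chi_\alpha} = t_we_{\chi_\alpha}$ via \cref{eq:we1} when $w\in W_\alpha$), and these are $A$-linearly independent because they are a subset of the $A$-basis of $\CH$. Hence they form an $A$-basis of $\CH_\alpha$ of cardinality $|W_\alpha|$.

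Next I would prove part \cref{i:2}. The right $\CH_\alpha$-module $\CH e_{\chi_\alpha}$ has underlying $A$-module $\bigoplus_{w\in W} A\, t_we_{\chi_\alpha}$, since $t_we_\chi e_{\chi_\alpha}=0$ unless $\chi=\chi_\alpha$. Using the coset factorization $w = u v$ with $u\in W^\alpha$ and $v\in W_\alpha$, I would show that $t_ve_{\chi_\alpha}\in\CH_\alpha$ for $v\in W_\alpha$ (immediate from part \cref{i:1}) and that each $t_we_{\chi_\alpha}$ lies in the right $\CH_\alpha$-span of $\{t_ue_{\chi_\alpha}\mid u\in W^\alpha\}$. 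The natural candidate is the relation $t_ue_{\chi_\alpha}\cdot(t_ve_{\chi_\alpha}) = t_ut_ve_{\chi_\alpha}$; when $\ell(uv)=\ell(u)+\ell(v)$ this equals $t_{uv}e_{\chi_\alpha}$ on the nose, which gives both spanning and, by a triangularity/length argument, $\CH_\alpha$-linear independence. The main obstacle is precisely this multiplicativity: one must check that for $u\in W^\alpha$ and $v\in W_\alpha$ the product $t_ut_v$ equals $t_{uv}$ without lower-order correction terms, which relies on the length-additivity $\ell(uv)=\ell(u)+\ell(v)$ characterizing minimal coset representatives. I would verify this using the first case of the product rule in \cref{thm:rel} applied inductively along a reduced expression for $v$, noting that left-multiplying a minimal representative $u$ by generators of $W_\alpha$ only increases length.

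Once length-additivity and the resulting identity $t_ut_v e_{\chi_\alpha} = t_{uv}e_{\chi_\alpha}$ are in hand, both the spanning and freeness follow cleanly: the set $\{t_ue_{\chi_\alpha}\mid u\in W^\alpha\}$ spans $\CH e_{\chi_\alpha}$ over $\CH_\alpha$ because the $t_{uv}e_{\chi_\alpha}$ range over all $t_we_{\chi_\alpha}$ as $(u,v)$ ranges over $W^\alpha\times W_\alpha$, and the module is free of rank $|W^\alpha|$ because the total $A$-rank is $|W| = |W^\alpha|\cdot|W_\alpha|$, matching $|W^\alpha|$ copies of the rank-$|W_\alpha|$ algebra $\CH_\alpha$. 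I expect the bookkeeping with the idempotents $e_{\chi_\alpha}$ and the repeated use of \cref{eq:we1} to be the only place where care is needed, but no genuinely hard estimate arises.
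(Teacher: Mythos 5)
Your proposal is correct and follows essentially the same route as the paper: part \cref{i:1} is obtained by hitting the basis $\{t_we_\chi\}$ with $e_{\chi_\alpha}(-)e_{\chi_\alpha}$ and using orthogonality of the idempotents together with \cref{ssec:ciw}\cref{eq:we1}, and part \cref{i:2} by rewriting the $A$-basis $\{t_we_{\chi_\alpha}\mid w\in W\}$ as $\{t_ue_{\chi_\alpha}(t_ve_{\chi_\alpha})\mid u\in W^\alpha,\ v\in W_\alpha\}$ via the factorization $w=uv$ and the length-additivity $\ell(uv)=\ell(u)+\ell(v)$. You actually make explicit the step $t_ut_ve_{\chi_\alpha}=t_{uv}e_{\chi_\alpha}$ that the paper leaves as ``clearly''; the only nit is that the relevant operation is right-multiplying the minimal coset representative $u$ by generators of $W_\alpha$, not left-multiplying.
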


\begin{proof}
  Because $\{\, t_we_{\chi} \mid \ w\in W,\ \chi\in X(D)\,\}$ is an
  $A$-basis of $\CH$, the set $\{\, e_{\chi_\alpha} t_w e_{\chi}
  e_{\chi_\alpha} \mid \ w\in W, \chi \in X(D) \,\}$ spans
  $\CH_\alpha$. If $\chi\ne \chi_\alpha$, then $e_{\chi_\alpha} e_{\chi}=0$
  and so the set $\{\, e_{\chi_\alpha} t_w e_{\chi_\alpha} \mid w\in W\,\}$
  spans $\CH_\alpha$. Since $e_{\chi_\alpha} t_w e_{\chi_\alpha} =
  e_{\chi_\alpha} e_{w\cdot \chi_\alpha} t_w$, it follows that
  $e_{\chi_\alpha} t_w e_{\chi_\alpha} =0$ if $w\cdot\chi_\alpha \ne
  \chi_\alpha$. If $w\cdot \chi_\alpha= \chi_\alpha$, then $e_{\chi_\alpha}
  t_w e_{\chi_\alpha} = t_w e_{\chi_\alpha}$.  Thus, the set $\{\, t_w
  e_{\chi_\alpha} \mid w\in W_{\alpha}\,\}$ spans $\CH_\alpha$. Since this
  set is a subset of a basis of $\CH$, it is linearly independent and so it
  is a basis of $\CH_\alpha$.

  Clearly $\CH e_{\chi_\alpha}$ is a free $A$-module with basis
  \[
  \{\, t_we_{\chi_\alpha} \mid w\in W\,\}= \{\, t_{w_1} e_{\chi_\alpha} (
  t_{w_2} e_{\chi_\alpha}) \mid w_1\in W^\alpha,\ w_2\in W_\alpha\,\}.
  \]
  It follows from this observation and what has already been proved that the
  set $\{\, t_we_{\chi_\alpha} \mid w\in W^\alpha\,\}$ is an
  $\CH_\alpha$-basis of $\CH e_{\chi_\alpha}$.
\end{proof}

\subsection{}
To continue, we need to make the isomorphism $W_\alpha \cong \Sigma_{m_1}
\times \dotsm \times \Sigma_{m_b}$ precise. Recall the integers $\mbar_j$
and the sets $M_j$ defined in terms of $\alpha$ in \cref{ssec:wa}. Suppose
$j\in[b]$ and $\sigma\in \Sigma_{m_j}$. Define a permutation $w_\sigma^j\in
W$ by
\[
w_\sigma^j (k) =
\begin{cases}
  k&\text{if $k\notin M_j$ and} \\
  \mbar_{j-1}+ \sigma(l) & \text{if $k=\mbar_{j-1}+l\in M_j$.}
  \end{cases}
\]
Then the rule $\sigma\mapsto w_\sigma^j$ defines an injective group
homomorphism from $\Sigma_{m_j}$ to $W_{\alpha}$. Let $W_{\alpha,j}$ denote
the image of this group homomorphism, so $W_{\alpha,j} \cong
\Sigma_{m_j}$. Then $W_{\alpha} = W_{\alpha,1} \dotsm W_{\alpha,b}$ is the
internal direct product of the subgroups $W_{\alpha,j}$.

In the following, we use two more notational conventions. First, because of
the factor $T_i^{(b^2-b)/2}$ in relation \cref{rel:6}, the parity of $b$
will play a role in the rest of the proof of the paper. To simplify the
notation, define $\epsilon= \zeta^{(b^2-b)/2}$. Then
\[
\text{$\epsilon=1$ if $b$ is odd} \qquad\text{and}\qquad \text{$\epsilon=-1$
  if $b$ is even}.
\]

Second, because $\{M_1, \dots, M_b\}$ is a partition of the set $[n]$, for
$i\in [n]$, there is a unique $j\in [b]$ such that $i\in M_j$. Define
$i'=j$. In other words, $(\mbar_{j-1}+l)'=j$ for $j\in [b]$ and
$l\in [m_j]$.

With these conventions, for $s_i\in W_\alpha$ define
\[
\Stilde^\alpha_i= \epsilon^{i'} t_{s_i}e_{\chi_\alpha} \in \CH_\alpha.
\]

\begin{lemma}\label{lem:alpr}
  The elements $\Stilde^\alpha_i$ for $s_i\in W_\alpha$ generate the
  $A$-algebra $\CH_\alpha$ and satisfy the relations
  \begin{align}
    &\Stilde^\alpha_{i_1} \Stilde^\alpha_{i_2} = \Stilde^\alpha_{i_2}
      \Stilde^\alpha_{i_1} && s_{i_1},s_{i_2}\in W_{\alpha}, \
                              |i_1-i_2|>1, \tag{$\textrm b_1$} \label{rel:b1}\\
    &\Stilde^\alpha_{i} \Stilde^\alpha_{i+1} \Stilde^\alpha_{i} =
      \Stilde^\alpha_{i+1} \Stilde^\alpha_{i} \Stilde^\alpha_{i+1}&& 
    s_i, s_{i+1}\in W_{\alpha},\quad \text{and} \tag{$\textrm 
    b_2$} \label{rel:b2}\\
    &(\Stilde^\alpha_{i})^2 =\bq e_{\chi_\alpha} + b\ba\,
      \Stilde^\alpha_{i}&& s_{i}\in W_{\alpha}. \tag{$\textrm
                           q$} \label{rel:q}
  \end{align}
\end{lemma}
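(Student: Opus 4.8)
The plan is to deduce all four assertions directly from the multiplication rules of \cref{thm:rel}, after first recording two facts that make the idempotent $e_{\chi_\alpha}$ transparent. A transposition $s_i=(i\ i+1)$ lies in $W_\alpha$ exactly when $i$ and $i+1$ lie in a common block $M_j$, that is, when $i'=(i+1)'$; and then $s_i$ fixes $\chi_\alpha$, so by \cref{ssec:ciw}\cref{eq:we1} one has $t_{s_i}e_{\chi_\alpha}=e_{\chi_\alpha}t_{s_i}$. Since $e_{\chi_\alpha}$ is idempotent, any product of the $\Stilde^\alpha_i$ collapses to $\pm\,t_{s_{i_1}}\dotsm t_{s_{i_k}}e_{\chi_\alpha}$, the sign being the unit $\epsilon^{i_1'+\dots+i_k'}$. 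For generation I would take a reduced expression $w=s_{i_1}\dotsm s_{i_k}$ of an element $w\in W_\alpha$ (which uses only generators in $W_\alpha$); by length-additivity and \cref{thm:rel} the product is $\pm\,t_we_{\chi_\alpha}$, so the $\Stilde^\alpha_i$ recover, up to a unit, every member of the basis $\{\,t_we_{\chi_\alpha}\mid w\in W_\alpha\,\}$ from \cref{lem:alba}\cref{i:1}. Hence they generate $\CH_\alpha$.

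The braid relations \cref{rel:b1} and \cref{rel:b2} will then follow from the corresponding identities in $W$. When $|i_1-i_2|>1$ the word $s_{i_1}s_{i_2}$ is reduced, so $t_{s_{i_1}}t_{s_{i_2}}=t_{s_{i_1}s_{i_2}}=t_{s_{i_2}s_{i_1}}=t_{s_{i_2}}t_{s_{i_1}}$ by \cref{thm:rel}, and both sides of \cref{rel:b1} carry the common scalar $\epsilon^{i_1'+i_2'}$. Similarly $s_is_{i+1}s_i=s_{i+1}s_is_{i+1}$ is reduced on each side, giving equal $t$-monomials; and since $s_i,s_{i+1}\in W_\alpha$ forces $i'=(i+1)'=(i+2)'$, both sides of \cref{rel:b2} carry the common scalar $\epsilon^{3i'}$, so \cref{rel:b2} holds.

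The quadratic relation \cref{rel:q} is the one place where the normalizing factor $\epsilon^{i'}$ does real work. Collecting the two copies of $e_{\chi_\alpha}$ via the commutation gives $(\Stilde^\alpha_i)^2=\epsilon^{2i'}t_{s_i}^2e_{\chi_\alpha}$. Since $\ell(s_is_i)<\ell(s_i)$, the second case of the rule for $t_st_x$ in \cref{thm:rel} yields $t_{s_i}^2=\bq+\ba\,T_i^{(b^2-b)/2}E_i\,t_{s_i}$. The key computation is to evaluate $T_i^{(b^2-b)/2}E_i$ on $e_{\chi_\alpha}$: writing $E_i=\sum_{k=0}^{b-1}T_i^kT_{i+1}^{-k}$ and using $i'=(i+1)'$, each summand $T_i^{(b^2-b)/2+k}T_{i+1}^{-k}$ acts on $e_{\chi_\alpha}$ by $\zeta^{i'(b^2-b)/2}=\epsilon^{i'}$ (the $\pm k$ exponents cancel) via \cref{ssec:oreps}\cref{eq:te}, so $T_i^{(b^2-b)/2}E_ie_{\chi_\alpha}=b\,\epsilon^{i'}e_{\chi_\alpha}$. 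Substituting back and commuting $e_{\chi_\alpha}$ past $t_{s_i}$ gives $t_{s_i}^2e_{\chi_\alpha}=\bq e_{\chi_\alpha}+b\ba\,\epsilon^{i'}t_{s_i}e_{\chi_\alpha}$; finally $\epsilon^2=1$ turns $\epsilon^{2i'}$ into $1$ and $\epsilon^{3i'}$ into $\epsilon^{i'}$, collapsing the scalars to $(\Stilde^\alpha_i)^2=\bq e_{\chi_\alpha}+b\ba\,\Stilde^\alpha_i$, which is \cref{rel:q}.

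The main obstacle is precisely the computation behind \cref{rel:q}: one must track the interaction of $T_i^{(b^2-b)/2}E_i$ with the weight $\chi_\alpha$ and recognize that the factor $\epsilon^{i'}$ was inserted exactly so that the parity of $b$ (encoded in $\epsilon=\pm1$) cancels, leaving the uniform quadratic relation of an Iwahori-Hecke algebra with parameters $\bq$ and $b\ba$. Everything else is routine bookkeeping with reduced words and the length function, together with the commutation $t_{s_i}e_{\chi_\alpha}=e_{\chi_\alpha}t_{s_i}$.
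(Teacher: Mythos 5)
Your proof is correct and follows essentially the same route as the paper's: generation via the basis $\{\,t_we_{\chi_\alpha}\mid w\in W_\alpha\,\}$ and reduced words, braid relations from the corresponding identities for the $t_{s_i}=R_i$ together with the commutation $t_{s_i}e_{\chi_\alpha}=e_{\chi_\alpha}t_{s_i}$, and the quadratic relation by evaluating $T_i^{(b^2-b)/2}E_i$ on $e_{\chi_\alpha}$ to get $b\,\epsilon^{i'}e_{\chi_\alpha}$ so that the powers of $\epsilon$ collapse. The only cosmetic difference is that you invoke Matsumoto/length-additivity for the braid relations where the paper cites relations \cref{rel:4} and \cref{rel:5} directly, which amounts to the same thing.
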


\begin{proof}
  It follows from \cref{thm:rel} and equation \cref{ssec:ciw}\cref{eq:we1}
  that the subalgebra generated by
  $\{\, \Stilde^\alpha_i \mid s_i\in W_\alpha \,\}$ contains the basis
  $\{\, t_w e_{\chi_\alpha} \mid w\in W_\alpha\,\}$ of $\CH_\alpha$ and
  hence is equal $\CH_\alpha$.

  Relations \cref{rel:b1} and \cref{rel:b2} follow immediately from
  relations \cref{rel:4} and \cref{rel:5} because $t_{s_l}$ and
  $e_{\chi_\alpha}$ commute when $s_l\in W_\alpha$.

  To prove relation \cref{rel:q}, using relation \cref{rel:6} and that
  $s_i\in W_{\chi_\alpha}$ we have
  \begin{align*}
    (\Stilde^\alpha_{i})^2&= (\epsilon^{i'} t_{s_i}e_{\chi_\alpha})^2\\
    &= t_{s_i}^2 e_{\chi_\alpha} \\
    &=\bigg(\bq +\ba T_{i}^{(b^2-b)/2} \bigg(\sum_{k=0}^{b-1}
    T_i^k T_{i+1}^{-k}\bigg) R_i\bigg) e_{\chi_\alpha} \\
    &=\bq e_{\chi_\alpha} +\ba T_{i}^{(b^2-b)/2} \bigg( \sum_{k=0}^{b-1}
    T_i^k T_{i+1}^{-k}\bigg) e_{\chi_\alpha} t_{s_i}.
  \end{align*}
  Notice that $s_i\in W_\alpha$ implies that $(i+1)'=i'$ and so by
  \cref{ssec:oreps}\cref{eq:te} we have
  \[
  T_i^k T_{i+1}^{-k} e_{\chi_\alpha}= \zeta ^{i'k+i'(-k)} e_{\chi_\alpha} =
  e_{\chi_\alpha} \quad\text{for}\quad 0\leq k\leq b-1
  \]
  and
  \[
  T_{i}^{(b^2-b)/2} e_{\chi_\alpha} = \zeta^{i'(b^2-b)/2} e_{\chi_\alpha} =
  \epsilon^{i'}e_{\chi_\alpha} .
  \]
  Therefore,
  \[
  (\Stilde^\alpha_{i})^2= \bq e_{\chi_\alpha} + \epsilon^{i'} b \ba\,
  e_{\chi_\alpha} t_{s_i} = \bq e_{\chi_\alpha} + b\ba \Stilde^\alpha_{i} .
  \]
\end{proof}

The generators and relations in the lemma are in fact a presentation of
$\CH_\alpha$.

\begin{lemma}\label{lem:iiso}
  There is an $A$-algebra isomorphism 
  \[
  \eta\colon \CI^{m_1}\otimes_{A} \dotsm \otimes_{A} \CI ^{m_b}
  \xrightarrow{\ \cong\ } \CH_\alpha
  \]
  with the property that if $\rho_j\in \Sigma_{m_j}$ for $j\in[b]$, then
  $\eta(S_{\rho_1} \otimes \dotsm \otimes S_{\rho_b})= \pm
  t_we_{\chi_\alpha}$, where $w=w^1_{\rho_1} \dotsm w^b_{\rho_b}$.
\end{lemma}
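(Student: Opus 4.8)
The plan is to assemble $\eta$ from $b$ pairwise-commuting homomorphisms, one per tensor factor, and then to match bases. First I would define, for each $j\in[b]$, an $A$-algebra homomorphism $\eta_j\colon \CI^{m_j}\to \CH_\alpha$ by sending the Coxeter generator $S_l$ of $\CI^{m_j}$ to $\Stilde^\alpha_{\mbar_{j-1}+l}$ for $l\in[m_j-1]$ (note that $w^j_{s_l}=s_{\mbar_{j-1}+l}$ under the embedding $\Sigma_{m_j}\cong W_{\alpha,j}\hookrightarrow W$). That this respects the defining relations of $\CI^{m_j}$ is exactly \cref{lem:alpr}: the commuting and braid relations come from \cref{rel:b1} and \cref{rel:b2}, and the quadratic relation $S_l^2=\bq+b\ba S_l$ matches \cref{rel:q} once one observes that the identity element of $\CH_\alpha$ is $e_{\chi_\alpha}$, so $\bq e_{\chi_\alpha}$ is the image of $\bq\cdot 1$.

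Next I would check that the images of $\eta_j$ and $\eta_{j'}$ commute for $j\ne j'$. Since the blocks $M_j$ and $M_{j'}$ are disjoint, two simple reflections $s_i\in W_{\alpha,j}$ and $s_k\in W_{\alpha,j'}$ cannot satisfy $|i-k|\le 1$ (an equality $|i-k|=1$ would force an index common to $M_j$ and $M_{j'}$), so $|i-k|>1$ and the commutation is a special case of \cref{rel:b1}. With the images pairwise commuting, the universal property of the tensor product furnishes a single $A$-algebra homomorphism $\eta$ with $\eta(S_{\rho_1}\otimes\dotsm\otimes S_{\rho_b})=\eta_1(S_{\rho_1})\dotsm\eta_b(S_{\rho_b})$. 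Surjectivity is then immediate, because every generator $\Stilde^\alpha_i$ of $\CH_\alpha$ lies in the image by \cref{lem:alpr}.

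It remains to evaluate $\eta$ on a basis and invoke a rank count. Fixing $\rho_j\in\Sigma_{m_j}$ with a reduced word $\rho_j=s_{l_1}\dotsm s_{l_p}$, I would use the basis relation $S_{\rho_j}=S_{l_1}\dotsm S_{l_p}$ in $\CI^{m_j}$ together with the facts that $e_{\chi_\alpha}$ is idempotent and commutes with $t_s$ for $s\in W_\alpha$ (by \cref{eq:we1}) to obtain $\eta_j(S_{\rho_j})=\epsilon^{\,j\,\ell(\rho_j)}\,t_{w^j_{\rho_j}}e_{\chi_\alpha}$; here the product of the $t_s$ collapses to $t_{w^j_{\rho_j}}$ because the parabolic embedding preserves length, so the word stays reduced in $W$. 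Multiplying over $j$ and using that the factors $w^j_{\rho_j}$ have disjoint support, so their lengths add and $\prod_j t_{w^j_{\rho_j}}=t_w$, this yields $\eta(S_{\rho_1}\otimes\dotsm\otimes S_{\rho_b})=\pm\,t_w e_{\chi_\alpha}$ with $w=w^1_{\rho_1}\dotsm w^b_{\rho_b}$ and sign $\epsilon^{\sum_j j\,\ell(\rho_j)}$, which is the asserted formula. As $(\rho_1,\dots,\rho_b)$ ranges over $\Sigma_{m_1}\times\dotsm\times\Sigma_{m_b}$, the element $w$ ranges over all of $W_\alpha$, so $\eta$ carries the standard tensor-product basis bijectively onto $\pm$ the basis $\{\,t_we_{\chi_\alpha}\mid w\in W_\alpha\,\}$ from \cref{lem:alba}\cref{i:1}. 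Hence $\eta$ is an isomorphism.

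I expect the main obstacle to be the bookkeeping in the last paragraph: keeping the signs $\epsilon^{\,j\,\ell(\rho_j)}$ straight and justifying that the products of $t_s$ over reduced words collapse to a single $t_w$. Both rest on the fact that $W_\alpha$ is a standard parabolic subgroup whose direct factors are supported on disjoint intervals, so that length is additive within each factor and across the product; this is precisely what keeps the words reduced and makes $t$ multiplicative on them.
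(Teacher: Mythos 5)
Your proposal is correct and follows essentially the same route as the paper: define the maps $\eta_j$ on Coxeter generators, verify the relations via \cref{lem:alpr}, use elementwise commutation of the images for distinct $j$ to assemble $\eta$, and conclude by showing $\eta$ carries the tensor-product basis onto (signs times) the basis $\{\,t_we_{\chi_\alpha}\mid w\in W_\alpha\,\}$ of \cref{lem:alba}. The only differences are cosmetic — you spell out the length-additivity and sign bookkeeping that the paper dismisses as a straightforward computation, and you skip the paper's intermediate observation that each $\eta_j$ is an isomorphism onto $\CH_{\alpha,j}$ in favor of the final basis count.
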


\begin{proof}
  For $j\in[b]$ let $\CH_{\alpha,j}$ be the subalgebra of $\CH_\alpha$
  generated by the set $\{\, \Stilde^\alpha_i \mid s_i\in W_{\alpha,j}\,\}$
  and define
  \[
  \eta_j \colon \CI^{m_j} \to \CH_{\alpha} \quad\text{by}\quad \eta_j(S_l) =
  \Stilde^\alpha_{\mbar_{j-1}+l} = \epsilon^j t_{w^j_{\sigma_l}} e_{\chi_\alpha}
  \quad \text{for $l\in[m_{j}]$.}
  \]
  Then it follows from \cref{lem:alba}\cref{i:1} and \cref{lem:alpr} that
  $\eta_j$ induces an $A$-algebra isomorphism
  $\CI^{m_j} \cong \CH_{\alpha,j}$. If $\rho\in \Sigma_{m_j}$ and
  $\tau_{l_1} \dotsm \tau_{l_r}$ is a reduced expression for $\rho$, then a
  straightforward computation shows that
  \begin{equation}\label{eq:eta}
  \eta_j(S_\rho) = (\epsilon^j)^r t_{w^j_\rho} e_{\chi_\alpha}.  
  \end{equation}

  The homomorphisms $\eta_j$ for $j\in [b]$ determine an $A$-linear map
  \[
  \eta\colon \CI^{m_1} \otimes_{A} \dotsm \otimes_a \CI ^{m_b} \to
  \CH_\alpha \quad\text{with}\quad \eta(h_1\otimes \dotsm \otimes h_b)=
  \eta_1(h_1) \dotsm \eta_b(h_b).
  \]
  The map $\eta$ is an $A$-algebra homomorphism because each $\eta_j$ is an
  $A$-algebra homomorphism and because $\CH_{\alpha,j}$ and
  $\CH_{\alpha,j'}$ commute elementwise for $j,j'\in[b]$ with $j\ne j'$.
  Because $\eta$ is an $A$-algebra homomorphism, if $\rho_j\in \Sigma_{m_j}$
  for $j\in[b]$ and $w=w^1_{\rho_1} \dotsm w^b_{\rho_b}$, then by
  \cref{eq:eta}
  $\eta(S_{\rho_1} \otimes \dotsm \otimes S_{\rho_b}) = \pm
  t_we_{\chi_\alpha}$.
  Therefore, $\eta$ maps the basis
  $\{\, S_{\rho_1} \otimes \dotsm \otimes S_{\rho_b}\mid \forall j\in[b],\
  \rho_j\in \Sigma_{m_j}\,\}$
  of $\CI^{m_1} \otimes_{A} \dotsm \otimes_{A} \CI^{m_b}$ bijectively onto a
  basis of $\CH_\alpha$, and hence is an $A$-algebra isomorphism.
\end{proof}

\subsection{The \texorpdfstring{$\mathbf{\CH}$}{H}-module
  \texorpdfstring{$\mathbf{V^\lambda}$}{V}}\label{ssec:vlam}
Suppose $\lambda=(\lambda^1, \dots, \lambda^b)\in \bpart bn$ and $|\lambda|=
\alpha$. Recall from \cref{ssec:hm} that for $j\in [b]$, $P^{\lambda^j}$ is
an $\CI^{m_j}$-module with $A$-basis $\{\, p_{\hat \tau}\mid \hat \tau\in
\SYT^{\lambda^j}\,\}$ and the action of the generator $S_l\in \CI^{m_j}$ on
the basis element $p_{\hat \tau}$ given by \cref{ssec:hm}\cref{eq:iab}.

Define
\[
P^\lambda_0= P^{\lambda^1} \otimes_{A} \dotsm \otimes_{A} P^{\lambda^b}.
\]
Then $P^\lambda_0$ is an
$\CI^{m_1} \otimes_{A} \dotsm \otimes_{A} \CI^{m_b}$-module.  We consider
$P^\lambda_0$ as an $\CH_\alpha$-module via transport of structure across
the isomorphism
$\eta\colon \CI^{m_1} \otimes_{A} \dotsm \otimes_{A} \CI^{m_b}
\xrightarrow{\ \cong\ } \CH_\alpha$ in \cref{lem:iiso}.

Finally, define
\[
V^\lambda= F_\alpha (P^\lambda_0)= \CH\otimes_{\CH_\alpha} P^\lambda_0 = \CH
e_{\chi_\alpha} \otimes_{\CH_\alpha} P^\lambda_0.
\]

We can now state the main theorem in this section.

\begin{theorem}\label{thm:irrep}
  Suppose $\lambda\in \bpart bn$ and set $\alpha=|\lambda|\in \compnb
  nb$. The $\CH$-module $V^\lambda$ has the following properties.
  \begin{enumerate}
  \item $V^\lambda$ is a free $A$-module with a basis $\{\, v_\tau\mid \tau
    \in \SYT^\lambda\,\}$ indexed by $\SYT^\lambda$. \label[equation]{i:t1}
  \item Suppose $\tau\in \SYT^\lambda$ and that $\tau = w\cdot \tau_0$ with
    $w\in W^{\alpha}$ and $\tau_0\in \SYT^\lambda_0$. Then $v_\tau = t_w
    v_{\tau_0}$. \label[equation]{i:t2}
  \item For $w\in W^\alpha$, the weight space
    $V^\lambda_{w\cdot \chi_{\alpha}}$ is the $A$-span of
    $\{\, t_w v_{\tau_0} \mid \tau_0\in \SYT^\lambda_0\,\}$. In particular,
    $V^\lambda_{\chi_{\alpha}}$ is the $A$-span of
    $\{\, v_{\tau_0} \mid \tau_0\in \SYT^\lambda_0\,\}$ and
    $V^\lambda_{\chi_{\alpha}}\cong P^\lambda_0$. \label[equation]{i:t3}
  \item $V^\lambda$ is in $\CO_{\alpha}$. \label[equation]{i:t4}
  \item Suppose $\tau\in \SYT^\lambda$. Then for $d\in D$
    \[
    t_d\cdot v_\tau = \chi_\tau(d) v_\tau,
    \]
    and for $i\in [n-1]$
    \begin{equation*}
      \label{eq:siact}
      t_{s_i} v_\tau =
      \begin{cases}
        v_{s_i\cdot \tau}&\text{if $j<j'$}\\
        \epsilon^j \big(\frac{1-x} {y(1-x^k)}\big) v_{\tau} + \epsilon^j
        \big(\frac {x-x^k} {y(1-x^k)}\big) v_{s_i \cdot\tau}
        &\text{if $j=j'$}\\
        \bq\, v_{s_i\cdot \tau}&\text{if $j>j'$,}
      \end{cases}
    \end{equation*}
    where $j=\tau_i$, $j'=\tau_{i+1}$, $k=\rho_{\tau^j}(i+1,i)$, and $v_{s_i
      \cdot\tau}=0$ if $s_i \cdot\tau$ is not
    standard. \label[equation]{i:t5}
  \item Let $f\colon A\to k$ be a specialization. Then ${}_fV^\lambda$ is an
    irreducible ${}_f\CH$-module if and only if ${}_fP^\lambda_0$ is an
    irreducible ${}_f\CH_{\alpha}$-module. \label[equation]{i:t6}
  \end{enumerate}
\end{theorem}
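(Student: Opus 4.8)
The plan is to extract parts (1)--(4) from the free-module and weight-space data already in place, to compute the $t_{s_i}$-action of part (5) by a three-way case split governed by \cref{thm:rel}, and to deduce part (6) from the equivalence of \cref{thm:eqv}. For (1) and (2), recall from \cref{lem:alba}\cref{i:2} that $\CH e_{\chi_\alpha}$ is free over $\CH_\alpha$ on $\{\,t_we_{\chi_\alpha}\mid w\in W^\alpha\,\}$, so $V^\lambda=\bigoplus_{w\in W^\alpha} t_we_{\chi_\alpha}\otimes_{\CH_\alpha}P^\lambda_0$ as $A$-modules. The standard basis of $P^\lambda_0$ is indexed by $\SYT^{\lambda^1}\times\dots\times\SYT^{\lambda^b}$, which I identify with $\SYT^\lambda_0$; writing $p_{\tau_0}$ for the vector attached to $\tau_0\in\SYT^\lambda_0$ and setting $v_{\tau_0}=1\otimes p_{\tau_0}$, $v_{w\cdot\tau_0}=t_wv_{\tau_0}$, \cref{pro:factab} converts the index set $W^\alpha\times\SYT^\lambda_0$ into $\SYT^\lambda$, which gives (1) and (2). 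For (3) and (4), $t_d(t_wv_{\tau_0})=(w\cdot\chi_\alpha)(d)\,t_wv_{\tau_0}$ by \cref{thm:rel} and \cref{ssec:ciw}\cref{eq:we1}, and the characters $w\cdot\chi_\alpha$ are pairwise distinct because $W^\alpha$ is a transversal of $W_\alpha=\mathrm{Stab}(\chi_\alpha)$; hence each summand is the weight space $V^\lambda_{w\cdot\chi_\alpha}$, giving (3), with $V^\lambda_{\chi_\alpha}\cong P^\lambda_0$ the unit of the adjunction of \cref{thm:eqv}. All weights lie in the orbit $\alpha$, so $V^\lambda=F_\alpha(P^\lambda_0)\in\CO_\alpha$, which is (4).

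The substance is part (5). The $t_d$-formula is immediate from (3) and \cref{lem:wt}, which give the weight of $v_\tau$ as $\chi_\tau$. For $t_{s_i}$ I write $v_\tau=t_wv_{\tau_0}$ and set $a=w^{-1}(i)$, $c=w^{-1}(i+1)$; since $w\in W^\alpha$ is increasing on each block $M_k$, the key combinatorial fact is that $s_iw$ again lies in $W^\alpha$ exactly when $a,c$ lie in different blocks, that is, when $j\neq j'$. If $j<j'$ then $\ell(s_iw)>\ell(w)$ and $s_iw\in W^\alpha$, so $t_{s_i}v_\tau=t_{s_iw}v_{\tau_0}=v_{s_i\cdot\tau}$. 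If $j>j'$ then $\ell(s_iw)<\ell(w)$; here $T_iv_\tau=\zeta^{j}v_\tau$ and $T_{i+1}v_\tau=\zeta^{j'}v_\tau$, so $E_iv_\tau=\big(\sum_{k=0}^{b-1}\zeta^{k(j-j')}\big)v_\tau=0$, the $\ba$-term in the \cref{thm:rel} expansion of $t_{s_i}t_w$ vanishes, and $t_{s_i}v_\tau=\bq\,t_{s_iw}v_{\tau_0}=\bq\,v_{s_i\cdot\tau}$.

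The remaining case $j=j'$ is the main obstacle. Here $a,c$ lie in one block, so $s_iw=w\sigma$ with $\sigma=(a\ c)\in W_\alpha$; the transversal property forces $\ell(s_iw)=\ell(w)+\ell(\sigma)$, and comparing with $\ell(s_iw)=\ell(w)+1$ shows $\sigma$ is a \emph{simple} reflection $s_a$ (so $c=a+1$). Thus $t_{s_i}v_\tau=t_w(t_{s_a}v_{\tau_0})$, and $t_{s_a}v_{\tau_0}$ is computed inside $\CH_\alpha$: through the isomorphism $\eta$ of \cref{lem:iiso}, $t_{s_a}e_{\chi_\alpha}$ acts on $p_{\tau_0}$ as $\epsilon^{j}S_l$ on the $j$-th Hoefsmit factor with $l=a-\mbar_{j-1}$, so \cref{ssec:hm}\cref{eq:iab} yields the two-term expression with $k=\delta_{\tau^j}(i+1,i)$ (equal to the axial distance of $a+1,a$ in $\tau_0^j$, since $w$ identifies the relevant boxes of $\tau_0^j$ and $\tau^j$); applying $t_w$ and using $t_wv_{s_a\cdot\tau_0}=v_{s_i\cdot\tau}$ finishes the case. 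The delicate points, and the reason this case is hardest, are keeping the sign $\epsilon^{j}=\zeta^{j(b^2-b)/2}$ and the normalization $\Stilde^\alpha_i=\epsilon^{i'}t_{s_i}e_{\chi_\alpha}$ consistent while matching the Hoefsmit coefficients, and checking that non-standardness of $s_i\cdot\tau$ corresponds to the convention $p_{s_l\cdot\hat\tau}=0$ in \cref{ssec:hm}.

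Finally, (6) is formal. By (4) and the fact that specialization commutes with induction, ${}_fV^\lambda\cong{}_fF_\alpha({}_fP^\lambda_0)$ lies in ${}_f\CO_\alpha$; by \cref{thm:eqv} the functor ${}_fF_\alpha\colon\mod{{}_f\CH_\alpha}\to{}_f\CO_\alpha$ is an equivalence, and since an equivalence together with the block embedding ${}_f\CO_\alpha\hookrightarrow\mod{{}_f\CH}$ preserves and reflects simplicity, ${}_fV^\lambda$ is irreducible if and only if ${}_fP^\lambda_0$ is.
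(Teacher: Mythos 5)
Your proposal is correct and follows essentially the same route as the paper's proof: the basis via the freeness of $\CH e_{\chi_\alpha}$ over $\CH_\alpha$ combined with \cref{pro:factab}, the weight-space computation for (3)--(4) and the $t_d$-action, the three-case analysis of $t_{s_i}v_\tau$ (with the middle case reduced through $\eta$ to Hoefsmit's formula \cref{ssec:hm}\cref{eq:iab} and the $j\ne j'$ cases handled by $E_i$ acting as $\sum_r\zeta^{r(j-j')}=0$), and \cref{thm:eqv} for (6). Your minor variants --- deducing $c=a+1$ from $\ell(w\sigma)=\ell(w)+\ell(\sigma)$ rather than from the interval structure of $M_j$, and applying $E_i$ directly to $v_\tau$ instead of commuting it past $t_w$ --- are cosmetic.
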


\begin{proof}
  By construction $\{\, p_{\hat \tau^1} \otimes \dotsm \otimes p_{\hat
    \tau^b}\mid \forall \ j\in [b], \hat \tau^j\in \SYT^{\lambda^j} \,\}$ is
  an $A$-basis of $P^\lambda_0$. Given a tuple $(\hat \tau^1,\dots, \hat
  \tau^b)$ with $\hat \tau^j\in \SYT^{\lambda^j}$ for $j\in [b]$, define
  $\tau^j_0$ to be the tableau obtained from $\hat \tau^j$ by adding
  $\mbar_{j-1}$ to each entry and define $\tau_0= (\tau^1_0,\dots,
  \tau^b_0)$. Then $\tau_0\in \SYT^\lambda_0$. Define
  \[
  \vtilde_{\tau_0}= p_{\hat \tau^1} \otimes \dotsm \otimes p_{\hat \tau^b}.
  \]
  Then $\{\, \vtilde_{\tau_0}\mid \tau_0\in \SYT^\lambda_0\,\}$ is an
  $A$-basis of $P^\lambda_0$ and it follows from \cref{lem:alba}\,\cref{i:2}
  that $\{\, t_w\otimes \vtilde_{\tau_0} \mid w\in W^\alpha,\
  \tau_0\in\SYT^\lambda_0\,\}$ is an $A$-basis of $V^\lambda$. For $w\in
  W^\alpha$ and $\tau_0\in \SYT^\lambda_0$ define
  \[
  v_\tau= t_w\otimes \vtilde_{\tau_0}.
  \]
  Then $\{\, v_\tau \mid \tau\in\SYT^\lambda\,\}$ is an $A$-basis of
  $V^\lambda$. This proves \cref{i:t1}. By definition, $v_\tau = t_w\otimes
  \vtilde_{\tau_0}= t_w(1\otimes \vtilde_{\tau_0}) = t_wv_{\tau_0}$ and so
  \cref{i:t2} holds as well.

  Next, for $\tau_0\in \SYT^\lambda_0$,
  $e_{\chi_\alpha} v_{\tau_0}= e_{\chi_\alpha} (1\otimes \vtilde_{\tau_0}) =
  1\otimes e_{\chi_\alpha} \vtilde_{\tau_0} = 1\otimes \vtilde_{\tau_0}
  =v_{\tau_0}$.
  This implies that $v_{\tau_0}\in V^\lambda_{\chi_\alpha}$. Therefore, it
  follows from \cref{ssec:ciw}\cref{eq:we2} that
  $t_wv_{\tau_0}\subseteq V^\lambda_{w\cdot \chi_\alpha}$ for every
  $w\in W$. In particular, if $\tau_0\in \SYT^\lambda_0$ and
  $w\in W^\alpha$, then
  $v_\tau= t_wv_{\tau_0} \in V^\lambda_{w\cdot \chi_\alpha}$. Because
  $\{\, v_\tau\mid \tau\in \SYT^\lambda\,\}$ is a basis of $V^\lambda$ this
  shows that for $w\in W^\alpha$, $V^\lambda_{w\cdot \chi_\alpha}$ is the
  $A$-span of $\{\, t_wv_{\tau_0}\mid \tau_0\in \SYT^\lambda_0\,\}$. In
  particular, $V^\lambda_{\chi_\alpha}$ is the $A$-span of
  $\{\, v_{\tau_0}\mid \tau_0\in \SYT^\lambda_0\,\}$ and the rule
  $\vtilde_{\tau_0}\mapsto v_{\tau_0}$ defines an $A$-linear isomorphism
  between $P^\lambda_0$ and $V^\lambda_{\chi_\alpha}$. This proves
  \cref{i:t3}. In addition, $V^\lambda_\chi=0$ if $\chi$ is not in the
  $W$-orbit of $\chi_\alpha$, and so $V^\lambda$ is in $\CO_\alpha$ as
  asserted in \cref{i:t4}.

  To prove \cref{i:t5}, fix $\tau=w\cdot \tau_0$ in $\SYT^\lambda$ with
  $w\in W^\alpha$ and $\tau_0\in \SYT^\lambda_0$. Then $v_\tau\in
  V^\lambda_{w\cdot \chi_\alpha}$, and so by \cref{ssec:ciw}\cref{eq:we2},
  \cref{ssec:sum}\cref{eq:sum}, and \cref{lem:wt} we have
  \[
  t_d\cdot v_\tau = (w\cdot \chi_{\alpha})(d)\cdot v_\tau = (w\cdot
  \chi_{\tau_0})(d)\cdot v_\tau = \chi_{w\cdot \tau_0}(d)\, v_{\tau}=
  \chi_{\tau}(d)\, v_{\tau} .
  \]

  Using the definitions and \cref{ssec:hm}\cref{eq:iab}, a straightforward
  computation shows that for $j\in [b]$ and $s_l\in W_{\alpha,j}$ we have
  \begin{equation}\label{eq:tvt}
    t_{s_l} e_{\chi_\alpha}\cdot \vtilde_{\tau_0}= \eta\inverse(t_{s_l}
    e_{\chi_\alpha})\cdot \vtilde_{\tau_0}=  \epsilon^j \left(\frac
      {x-1}{y(1-x^k)}\right)\, \vtilde_{\tau_0}+ \epsilon^j \left(\frac
      {x-x^k}{y(1-x^k)} \right) \, \vtilde_{s_l\cdot \tau_0},
  \end{equation}
  where $k=\delta_{\tau^j_0}(l+1,)$ and $\vtilde_{s_l\cdot\tau_0}=0$ if
  $s_l\cdot \tau_0$ is not standard.

  Now suppose $i\in [n-1]$ and consider $t_{s_i}\cdot v_\tau = t_{s_i} t_w
  \cdot v_{\tau_0}$. Suppose $\tau_i=j$ and $\tau_{i+1}=j'$. Then $i\in w(
  M_{j})$ and $i+1\in w(M_{j'})$.  There are three cases. First, if $j<j'$,
  then $s_iw>w$ and $s_iw\in W^\alpha$, so
  \[
  t_{s_i}\cdot v_\tau = t_{s_iw}\cdot v_{\tau_0} = v_{s_iw\cdot\tau_0}=
  v_{s_i\cdot \tau} .
  \]
  Second, if $j=j'$, then because $w\in W^\alpha$ and $i, i+1\in w(M_j)$,
  there is an $l$ in $M_j$ such that $l<m_j$, $w(l)=i$, and
  $w(l+1)=i+1$. Then $s_l\in W_{\alpha, j}$ and $s_iw=ws_l$. Therefore,
  using the definitions and \cref{eq:tvt} we have
  \begin{align*}
    t_{s_i}\cdot v_\tau &= t_wt_{s_l}\otimes \vtilde_{\tau_0}\\
    & = t_w\otimes t_{s_l}e_{\chi_\alpha}\cdot \vtilde_{\tau_0} \\
    & = t_w\otimes \epsilon^j \bigg( \frac {x-1}{y(1-x^k)}\,
    \vtilde_{\tau_0} + \frac {x-x^k}{y(1-x^k)}\, \vtilde_{s_l\cdot\tau_0}
    \bigg)\\
    &= \epsilon^j \bigg( \frac {x-1}{y(1-x^k)} \bigg)\,t_w\otimes
    \vtilde_{\tau_0} + \epsilon^j \bigg( \frac {x-x^k} {y(1-x^k)}\bigg) \,
    t_w \otimes \vtilde_{s_l\cdot \tau_0}\\
    &= \epsilon^j \bigg( \frac {x-1}{y(1-x^k)} \bigg)t_w\cdot v_{\tau_0} +
    \epsilon^j \bigg( \frac {x-x^k} {y(1-x^k)}\bigg) t_w v_{s_l\cdot
      \tau_0}\\
    & = \epsilon^j \bigg( \frac {x-1}{y(1-x^k)}\bigg) v_{\tau} + \epsilon^j
    \bigg( \frac {x-x^k}{y(1-x^k)}\bigg) v_{s_i\cdot \tau} ,
  \end{align*}
  where $k=\delta_{\tau^j}(i+1,i)$ and $\vtilde_{s_l\cdot \tau_0}=0= v_{s_i
    \cdot\tau}$ if $s_l\cdot \tau_0$, or equivalently if $s_i \cdot\tau$, is
  not standard. Finally, if $j>j'$, then $s_iw<w$ and $s_iw\in
  W^\alpha$. Therefore, by \cref{thm:rel}
  \begin{align*}
    t_{s_i}\cdot v_\tau &= t_{s_i}t_{w}\cdot v_{\tau_0}\\
    & = (\bq t_{s_iw} +\ba T_{i}^{(b^2-b)/2} E_i t_w )\cdot v_{\tau_0}\\
    &= \bq t_{s_iw} \cdot v_{\tau_0} +\ba T_{i}^{(b^2-b)/2} E_i t_w \cdot
    v_{\tau_0}\\
    & = \bq v_{s_i\cdot \tau} +\ba T_{i}^{(b^2-b)/2} E_i t_w e_{\chi_\alpha}
    \cdot v_{\tau_0}.
  \end{align*}
  By \cref{thm:rel} and \cref{ssec:oreps}\cref{eq:te} we have
  \begin{align*}
    E_i t_w e_{\chi_\alpha}&= \bigg(\sum_{r=0}^{b-1} T_i^r T_{i+1}^{-r}
    \bigg) t_w e_{\chi_\alpha} \\
    &= t_w \bigg( \sum_{r=0}^{b-1} T_{w\inverse(i)}^r
    T_{{w\inverse(i+1)}}^{-r} \bigg) e_{\chi_\alpha}\\
    &= \bigg( \sum_{r=0}^{b-1} (\zeta^{j-j'})^r \bigg) t_w e_{\chi_\alpha}=
    0
  \end{align*}
  because $j\ne j'$ and so $\zeta^{j-j'}$ is a $b\th$ root of unity not
  equal to $1$. Therefore,
  \[
  t_{s_i}\cdot v_\tau = \bq v_{s_i\cdot \tau}.
  \]

  The last statement in the theorem follows from \cref{thm:eqv} and the
  isomorphism ${}_fP^\lambda_0\cong {}_fV^\lambda_{\chi_\alpha}$ that
  follows from the third statement of the theorem.
\end{proof}

\subsection{Irreducible modules and semisimple
  specializations}\label{ssec:spec}

\begin{corollary}\label{cor:sp}
  Let $f\colon A\to k$ be a specialization such that $k$ is a field and
  consider the specialized algebra ${}_f\CH$.
  \begin{enumerate}
  \item Suppose that $\lambda=(\lambda^1, \dots, \lambda^b)\in \bpart bn$
    and that for all $j\in [b]$ the ${}_f\CI^{|\lambda^j|}$-module
    ${}_fP^{\lambda^j}$ is irreducible. Then the ${}_f\CH$-module ${}_f
    V^\lambda$ is absolutely irreducible.
  \item Suppose that $\alpha=(m_1, \dots, m_b)\in \compnb nb$ and that for
    all $\lambda=(\lambda^1, \dots, \lambda^b)\in \bpart bn$ with
    $|\lambda|=\alpha$ and all $j\in [b]$, the ${}_f\CI^{m_j}$-module
    ${}_fP^{\lambda^j}$ is irreducible. Then ${}_f\CH_\alpha$ is a split
    semisimple $k$-algebra and $\{\, {}_fV^\lambda\mid \lambda\in \bpart
    bn,\ |\lambda|=\alpha \,\}$ is a complete set of inequivalent,
    irreducible ${}_f\CH$-modules in ${}_f\CO_\alpha$.
  \item Suppose that for all $\lambda=(\lambda^1, \dots, \lambda^b)\in
    \bpart bn$ and all $j\in [b]$, the ${}_f\CI^{|\lambda^j|}$-module
    ${}_fP^{\lambda^j}$ is irreducible. Then ${}_f\CH$ is a split semisimple
    $k$-algebra and $\{\, {}_fV^\lambda\mid \lambda\in \bpart bn\,\}$ is a
    complete set of inequivalent, irreducible
    ${}_f\CH$-modules. \label[equation]{it:3}
  \end{enumerate}
\end{corollary}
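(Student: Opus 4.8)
The plan is to push the irreducibility of the tensor factors $P^{\lambda^j}$ up to $V^\lambda$ through the equivalence of \cref{thm:eqv} and the isomorphism of \cref{lem:iiso}, and then to glue the blocks together. For part (1), I would first note that the construction is stable under further specialization: if $\overline{k}$ is an algebraic closure of $k$ with inclusion $g\colon k\hookrightarrow\overline{k}$, then ${}_{g\circ f}(-)=\overline{k}\otimes_k{}_f(-)$, and the block machinery together with \cref{thm:eqv} and \cref{thm:irrep} applies verbatim to $g\circ f$. Applying the last part of \cref{thm:irrep} to $g\circ f$ shows that $\overline{k}\otimes_k{}_fV^\lambda$ is irreducible over ${}_{g\circ f}\CH$ if and only if $\overline{k}\otimes_k{}_fP^\lambda_0$ is irreducible over ${}_{g\circ f}\CH_\alpha$. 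Hence ${}_fV^\lambda$ is absolutely irreducible if and only if ${}_fP^\lambda_0={}_fP^{\lambda^1}\otimes_k\dotsm\otimes_k{}_fP^{\lambda^b}$ is, and because a tensor product over an algebraically closed field of irreducible modules for the tensor factors of a tensor product of algebras is again irreducible, it is enough to prove that each irreducible ${}_fP^{\lambda^j}$ is in fact absolutely irreducible.

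This last implication is the crux. I would establish it from the seminormal structure of Hoefsmit's modules: in the basis $\{p_{\hat\tau}\}$ the commuting Jucys--Murphy elements act diagonally, the eigenvalue attached to $p_{\hat\tau}$ encoding the content sequence of $\hat\tau$, and distinct standard tableaux have distinct content sequences. When ${}_fP^{\lambda^j}$ is irreducible these eigenvalue tuples remain distinct after applying $f$, so every Jucys--Murphy weight space is one-dimensional; any endomorphism is then diagonal in this basis, and commuting with the generators $S_l$, whose matrix in \cref{ssec:hm}\cref{eq:iab} has nonzero off-diagonal coefficients joining $p_{\hat\tau}$ and $p_{s_l\cdot\hat\tau}$, forces it to be scalar. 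Thus $\operatorname{End}({}_fP^{\lambda^j})=k$ and Burnside's theorem yields absolute irreducibility; the same separation of weights shows that ${}_fP^{\hat\lambda}$ and ${}_fP^{\hat\mu}$ are non-isomorphic for $\hat\lambda\ne\hat\mu$. Verifying that the content tuples really do stay distinct in exactly the range where ${}_fP^{\lambda^j}$ is irreducible is the delicate point I expect to require the most care.

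For part (2), fix $\alpha=(m_1,\dots,m_b)$. The hypothesis makes every ${}_fP^{\hat\lambda}$ with $\hat\lambda\vdash m_j$ irreducible, hence by part (1) absolutely irreducible and, for distinct shapes, non-isomorphic. Since ${}_f\CI^{m_j}$ is free of rank $m_j!$ and $\sum_{\hat\lambda\vdash m_j}|\SYT^{\hat\lambda}|^2=m_j!$ by the Robinson--Schensted correspondence, a Wedderburn dimension count identifies ${}_f\CI^{m_j}$ with $\prod_{\hat\lambda\vdash m_j}M_{|\SYT^{\hat\lambda}|}(k)$: it is split semisimple with the ${}_fP^{\hat\lambda}$ a complete set of simples. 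A tensor product of split semisimple $k$-algebras is split semisimple with simples the external tensor products of the factors' simples, so by \cref{lem:iiso} the algebra ${}_f\CH_\alpha$ is split semisimple and the modules ${}_fP^\lambda_0=\bigotimes_j{}_fP^{\lambda^j}$ with $|\lambda|=\alpha$ form a complete set of inequivalent simple ${}_f\CH_\alpha$-modules. The equivalence ${}_fF_\alpha$ of \cref{thm:eqv} carries these bijectively to simples, so $\{\,{}_fV^\lambda\mid|\lambda|=\alpha\,\}$ is a complete set of inequivalent irreducibles in ${}_f\CO_\alpha$.

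Finally, for part (3) the hypothesis gives the hypothesis of part (2) for every $\alpha$, so each ${}_f\CH_\alpha$ is split semisimple. By the remark following \cref{thm:eqv} the two-sided ideal ${}_f\CH e_\alpha\cong M_{r_\alpha}({}_f\CH_\alpha)$ is split semisimple, and since ${}_f\CH=\bigoplus_\alpha{}_f\CH e_\alpha$ the whole algebra ${}_f\CH$ is split semisimple. The block decomposition \cref{ssec:bk}\cref{eq:bk} shows that every simple ${}_f\CH$-module lies in a single ${}_f\CO_\alpha$, hence equals some ${}_fV^\lambda$ with $|\lambda|=\alpha$ by part (2), while modules in different blocks are non-isomorphic since $\Hom$ vanishes between distinct blocks. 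Therefore $\{\,{}_fV^\lambda\mid\lambda\in\bpart bn\,\}$ is a complete set of inequivalent irreducible ${}_f\CH$-modules.
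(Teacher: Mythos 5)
Your overall architecture --- reduce to $P^\lambda_0$ via \cref{thm:irrep}\cref{i:t6} and \cref{thm:eqv}, reduce further to the individual ${}_f P^{\lambda^j}$ via \cref{lem:iiso} and the tensor-product structure, then assemble the blocks with a dimension count --- is the same as the paper's, and parts (2) and (3) are essentially sound modulo part (1). The genuine gap is exactly at the step you flag as delicate: you need each irreducible ${}_fP^{\lambda^j}$ to be \emph{absolutely} irreducible (and distinct shapes to give non-isomorphic modules), and you propose to get this from the claim that irreducibility of ${}_fP^{\hat\lambda}$ forces the Jucys--Murphy content sequences of distinct standard tableaux to remain distinct after applying $f$. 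That implication is not proved, and it is false in general: for the symmetric group $\Sigma_{p+1}$ in characteristic $p$ (the $x=1$ specialization of $\CI^{p+1}$), the Specht module for $(p,1)$ is irreducible, yet the tableaux with $p$ and with $p+1$ in the second row have identical content sequences modulo $p$. So ``irreducible $\Rightarrow$ separated'' cannot be the mechanism; if separation holds in your setting it is because of the unit conditions built into $A$ (every $1+x+\dotsm+x^k$ with $k\in[n-2]$ is a unit, which bounds the possible quantum characteristic from below by $n$), not because of irreducibility --- and that is a different argument which you would still have to write out, together with the verification that the off-diagonal coefficients $\frac{x-x^k}{y(1-x^k)}$ do not vanish under $f$.

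The paper sidesteps all of this with one citation: $\CI^m$ is cellular and $P^{\hat\lambda}$ is a cell module, so by Graham--Lehrer an irreducible specialized cell module is automatically absolutely irreducible, cell modules with distinct labels and nonzero simple heads are non-isomorphic, and a cellular algebra all of whose cell modules are irreducible is split semisimple. Replacing your seminormal-form argument by this fact repairs part (1) and simultaneously supplies the non-isomorphism and split semisimplicity statements you need in parts (2) and (3); the rest of your proposal then goes through. (In part (3) you also lean on the isomorphism ${}_f\CH e_\alpha\cong M_{r_\alpha}({}_f\CH_\alpha)$, which the paper only sketches in a remark; the paper instead closes part (3) with the direct count $\sum_{\lambda\in\bpart bn}|\SYT^\lambda|^2=b^n\, n!=\dim {}_f\CH$, which shows the radical vanishes without invoking the matrix-ring isomorphism.)
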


\begin{proof}
  The algebra $\CI^m$ is a cellular algebra in the sense of Graham and
  Lehrer and $P^{\hat \lambda}$ is a cell module for every partition $\hat
  \lambda$ of $m$. Thus, it follows from \cite{grahamlehrer:cellular} that
  if ${}_fP^{\hat \lambda}$ is irreducible, then it is absolutely
  irreducible. It then follows that if ${}_fP^{\hat \lambda}$ is irreducible
  for all $\hat \lambda\in \CP(m)$, then ${}_f\CI^m$ is a split semisimple
  $k$-algebra.

  To prove the first statement, set $\alpha=|\lambda|$. By assumption,
  ${}_fP^{\lambda^j}$ is irreducible for $j\in [b]$, so ${}_fP^\lambda_0
  \cong {}_fP^{\lambda^1} \otimes_{k} \dotsm \otimes_{k} {}_fP^{\lambda^b}$
  is a product of absolutely irreducible modules and hence is absolutely
  irreducible. It follows that ${}_fV^\lambda$ is absolutely irreducible
  because induction from $\mod {{}_f\CH_\alpha}$ to ${}_f\CO_\alpha$ is an
  equivalence of categories by \cref{thm:eqv}.

  Now suppose $\alpha=(m_1, \dots, m_b)\in \compnb nb$ and that for all
  $\lambda=(\lambda^1, \dots, \lambda^b)\in \bpart bn$ with $|\lambda|=
  \alpha$ and all $j\in [b]$, the ${}_f\CI^{m_j}$-module ${}_fP^{\lambda^j}$
  is absolutely irreducible. Then for all $j\in [b]$, ${}_f\CI^{m_j}$ is a
  split semisimple $k$-algebra with simple modules $\{\, {}_fP^{\hat
    \lambda}\mid \hat \lambda\in \CP(m_j)\,\}$. Thus, it follows from
  \cref{lem:iiso} that ${}_f\CH_\alpha$ is split semisimple and that the set
  $\{\, {}_fP^{\lambda}_0 \mid \lambda \in \bpart bn,\ |\lambda|=\alpha
  \,\}$ is a complete set of inequivalent, irreducible
  ${}_f\CH_\alpha$-modules. Therefore, by \cref{thm:eqv}, the set $\{\,
  {}_fV^{\lambda} \mid \lambda \in \bpart bn,\ |\lambda|=\alpha\,\}$ is a
  complete set of inequivalent, irreducible ${}_f\CH$-modules in
  ${}_f\CO_\alpha$. This proves the second statement.

  Finally, suppose that for all $\lambda=(\lambda^1, \dots, \lambda^b)\in
  \bpart bn$ and all $j\in [b]$, the ${}_f\CI^{m_j}$-module
  ${}_fP^{\lambda^j}$ is absolutely irreducible. Then it follows from the
  block decomposition \cref{ssec:bk}\cref{eq:bk} and what has already been
  proved that the set $\{\, {}_fV^{\lambda} \mid \lambda \in \bpart bn \,\}$
  is a complete set of inequivalent, irreducible ${}_f\CH$-modules, each of
  which is absolutely irreducible. Therefore
  \[
  \dim ({}_f\CH /\operatorname{rad} {}_f\CH) = \sum_{\lambda\in \bpart bn}
  (\dim {}_fV^\lambda)^2= \sum_{\lambda\in \bpart bn} |\SYT^\lambda|^2.
  \]
  But $\sum_{\lambda\in \bpart bn} |\SYT^\lambda|^2= b^n n!= \dim {}_f\CH$,
  and so $\operatorname{rad} {}_f\CH =0$, which implies that ${}_f\CH$ is a
  split semisimple $k$-algebra.
\end{proof}

Recall that $K$ is the quotient field of $A$.  Hoefsmit
\cite{hoefsmit:representations} has shown that for $m>1$ and $\hat\lambda
\in \CP(m)$, the ${}_K\CI^{m}$-module ${}_KP^{\hat \lambda}$ is irreducible,
and so the next corollary follows from \cref{cor:sp}\cref{it:3}.

\begin{corollary}\label{cor:K}
  The $K$-algebra ${}_K\CH$ is split semisimple and $\{\, {}_KV^\lambda\mid
  \lambda\in \bpart bn\,\}$ is a complete set of inequivalent, irreducible
  ${}_K\CH$-modules.
\end{corollary}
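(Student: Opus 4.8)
The plan is to obtain the statement as the special case of \cref{cor:sp}\cref{it:3} in which the specialization is the inclusion $\imath\colon A\hookrightarrow K$ of $A$ into its quotient field, so that ${}_f\CH={}_K\CH$ and ${}_fV^\lambda={}_KV^\lambda$. By that result the conclusion follows once I check its single hypothesis: for every $\lambda=(\lambda^1,\dots,\lambda^b)\in\bpart bn$ and every $j\in[b]$, the ${}_K\CI^{|\lambda^j|}$-module ${}_KP^{\lambda^j}$ is irreducible.

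To verify this I would split on the size $m=|\lambda^j|$. When $m>1$, irreducibility of ${}_KP^{\lambda^j}$ over ${}_K\CI^m$ is precisely Hoefsmit's theorem, recalled just before the corollary. When $m\in\{0,1\}$ one has $\CI^0=\CI^1=A$ by convention, and since there is a unique standard tableau of shape $\emptyset$ and of shape $(1)$, the module $P^{\lambda^j}$ is free of rank one over $A$; hence ${}_KP^{\lambda^j}$ is a one-dimensional $K$-vector space and so is an irreducible module over ${}_K\CI^m=K$. Thus the hypothesis of \cref{cor:sp}\cref{it:3} holds uniformly in $\lambda$ and $j$, and the corollary yields both that ${}_K\CH$ is split semisimple and that $\{\,{}_KV^\lambda\mid\lambda\in\bpart bn\,\}$ is a complete set of inequivalent irreducible ${}_K\CH$-modules.

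There is no substantive obstacle once \cref{cor:sp} and Hoefsmit's theorem are in hand; the statement is immediate, as its placement directly after \cref{cor:sp} suggests. The only point deserving explicit mention is the bookkeeping at the degenerate sizes $m=0,1$, so that multipartitions with empty or singleton components are covered by the irreducibility hypothesis. Beyond this, all of the work has already been done in the construction of $V^\lambda$ in \cref{ssec:vlam} and in the semisimplicity argument of \cref{cor:sp}.
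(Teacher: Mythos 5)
Your proposal is correct and follows exactly the route the paper takes: apply \cref{cor:sp}\cref{it:3} to the inclusion $A\hookrightarrow K$, with the irreducibility hypothesis supplied by Hoefsmit's theorem for $m>1$. Your explicit handling of the degenerate sizes $m\in\{0,1\}$ is a small point the paper leaves implicit but does not change the argument.
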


\section{Connections with Yokonuma-Hecke algebras}\label{sec:last}

In this section we explain the connections between the constructions and
results in this paper and the related constructions and results of Thiem
\cite{thiem:thesis}, Juyumaya \cite{juyumaya:representation}
\cite{juyumaya:nouveaux}, Chlouveraki and Poulain d'Andecy
\cite{chlouverakipoulaindandecy:representation}, and Jacon and Poulain
d'Andecy \cite{jaconpoulaindandecy:isomorphism}.

Suppose $q$ is a prime power and $f\colon \BBZ[\bq, \ba]\to \BBC$ is the
specialization with $f(\bq)=q$ and $f(\ba)=1$. Then as recalled in
\cref{ssec:sp}, the specialized algebra ${}_f \CH_{q-1,n}$ is isomorphic to
the Hecke algebra $\CH_{\BBC}(G,U)$.

\subsection{Thiem's thesis}\label{ssec:Th}
This paper is an extension of Thiem's study of $\CH_{\BBC}(G,U)$ in
\cite[Chapter 6]{thiem:thesis} to the more general case of Yokonuma-type
Hecke algebras. The main differences are the following.

\begin{enumerate}
\item This paper deals with a larger class of algebras that are ``generic''
  algebras with minimal assumptions on the underlying ring of scalars. The
  results in \cite[Chapter 6]{thiem:thesis} are obtained from the results in
  this paper by taking $b=q-1$ and setting $\bq=q$ and $\ba=1$.
\item The formulation of the block/weight space decomposition of $\CH$ is
  formalized and placed in a categorical framework.
\item More details of the structure of the representations $V^\lambda$ in
  \cref{sec:module} are worked out.
\end{enumerate}

\subsection{Juyumaya's presentation}\label{ssec:Ju}
Juyumaya \cite{juyumaya:nouveaux} found an intriguing presentation of the
Hecke algebra $\CH_{\BBC}(G,U)$ that leads to a simpler presentation for
one-parameter Yokonuma-Hecke algebras than the presentation arising from
\cref{ssec:Hbn} when $b=q-1$. Using the notation above, Juyumaya's
presentation is described as follows.

Suppose for the moment that $q$ is a power of an odd prime $p$ and $\xi$ is
a primitive $p\th$ root of unity in $\BBC$. Let $h\colon \BBZ[\bq, \ba]\to
\BBZ[\xi][\bq, \bq\inverse]$ be the specialization with $h(\bq)=\bq$ and
$h(\ba)=1$ and consider the specialized algebra ${}_h\CH_{q-1,n}$. To
streamline the notation, set $\Atilde= \BBZ[\xi][\bq, \bq\inverse]$, $\CHt=
{}_h\CH_{q-1,n}$, and denote the generators $1\otimes T_j$ and $1\otimes
R_i$ of $\CHt$ simply by $T_j$ and $R_i$, respectively. With these
conventions, the quadratic relation \cref{rel:6} becomes
\begin{equation*}
  R_{i}^2=\bq+ T_{i}^{(q-1)/2} E_i R_i \quad \text{for $i\in [n-1]$.} 
\end{equation*}

Fix a generator $\omega$ of $\BBF_q^\times$ and a non-trivial character
$\psi\colon \BBF_q\to \Atilde^\times$ of the additive group of $\BBF_q$. For
$i\in [n-1]$ define
\[
F_i= \sum_{j=1}^{q-1} \psi(\omega^j) T_{i}^j,\quad\text{and}\quad R_i'=
T_{i}^{(q-1)/2} R_{i} \quad\text{in}\quad \CHt.
\]
Juyumaya's presentation is in terms of the elements
\[
J_i= q\inverse( E_i+R_i'F_i) \quad\text{and}\quad P_i= (q^2-1)\inverse(
E_i+R_i' E_i) .
\]
The arguments in \cite[\S2]{juyumaya:nouveaux} show that $\CHt$ has
generators
\[
\text{$T_1$, \dots, $T_n$, $J_1$, \dots $J_{n-1}$}
\]
and relations \cref{rel:1}, \cref{rel:2}, and 
\begin{alignat}{2}
  &T_jJ_{i}=J_{i}T_{s_i(j)} \qquad&& \text{for $j\in[n]$ and
    $i\in [n-1]$,} \tag{$\textrm {jr}_3$} \label{rel:j3} \\
  &J_{i}J_{i'}=J_{i'}J_{i} \qquad&& \text{for $i,i'\in [n-1]$ with
    $|i-i'|>1$,} \tag{$\textrm {jr}_4$} \label{rel:j4}\\
  &J_{i}J_{i+1}J_{i}= J_{i+1}J_{i}J_{i+1} \qquad&& \text{for $i\in [n-2]$,
    and} \tag{$\textrm {jr}_5$} \label{rel:j5}\\
  &J_{i}^2= 1+ (\bq^{-2}-1)P_i \qquad&& \text{for $i\in [n-1]$.}
  \tag{$\textrm {jr}_6$} \label{rel:j6}
\end{alignat}
This is essentially the presentation in \cite[Theorem
3]{juyumaya:representation} and \cite[Theorem 2.18]{juyumaya:nouveaux}.

To get a presentation similar to that in \cref{ssec:Hbn}, for $i\in [n-1]$
define
\[
G_i=-\bq\inverse R_i'(F_i-(\bv+1)\inverse E_i),
\]
where $\bv$ is a square root of $\bq$. It can be shown that $\CHt$ has a
presentation with generators
\[
\text{$T_1$, \dots, $T_n$, $G_1$, \dots $G_{n-1}$}
\]
and relations \cref{rel:1}, \cref{rel:2}, and
\begin{alignat}{2}
  &T_jG_{i}=G_{i}T_{s_i(j)} \qquad&& \text{for $j\in [n]$ and
    $i\in [n-1]$,} \tag{$\textrm {gr}_3$} \label{rel:g3} \\
  &G_{i}G_{i'}=G_{i'}G_{i} \qquad&& \text{for $i,i'\in[n-1]$ with
    $|i-i'|>1$,} \tag{$\textrm {gr}_4$} \label{rel:g4}\\
  &G_{i}G_{i+1}G_{i}= G_{i+1}G_{i}G_{i+1} \qquad&& \text{for $i\in [n-2]$,
    and} \tag{$\textrm {gr}_5$} \label{rel:g5}\\
  &G_{i}^2= \bq+ E_iG_i \qquad&& \text{for $i\in [n-1]$.}  \tag{$\textrm
    {gr}_6$} \label{rel:g6}
\end{alignat}

Notice that if $q$ is a power of two, then relation \cref{rel:6} in
${}_f\CH_{q-1,n}$ is simply
\[
R_{i}^2= q+ E_iR_i \qquad \text{for $i\in [n-1]$,}
\]
and so the presentation just given for the one-parameter Yokonuma-Hecke
algebra $\CHt ={}_h\CH_{q-1,n}$ when $q$ is odd is valid for all prime
powers.

Because $G_i$ is defined in terms of $F_i$ and $F_i$ depends on the additive
character $\psi$, and thus on the arithmetic of the field $\BBF_q$, it is
not immediately clear how to modify Juyumaya's construction to obtain a
presentation of the Yokonuma-type Hecke algebra $\CH_{b,n}$ when $b\ne q-1$.

\subsection{Work of Chlouveraki, Jacon, and Poulain
  d'Andecy}\label{ssec:CLP}
Suppose as above that $\bv$ is a square root of $\bq$. Jacon and Poulain
d'Andecy \cite{jaconpoulaindandecy:isomorphism} define two-parameter
Yokonuma-Hecke algebras $Y_{b,n}$ that are $\BBC[\bv, \bv\inverse,
\ba]$-algebras with generators
\[
\text{$T_1$, \dots, $T_n$, $G_1$, \dots $G_{n-1}$}
\]
and relations \cref{rel:1}, \cref{rel:2}, \cref{rel:g3}, \cref{rel:g4},
\cref{rel:g5}, and the quadratic relation
\begin{alignat}{2}
  &G_{i}^2= \bv^2+ \ba (b\inverse E_i)G_i \qquad&& \text{for $i\in[n-1]$.}
  \tag{$\textrm {gr}_6'$} \label{rel:g6''}
\end{alignat}

\begin{lemma}\label{lem:spa}
  Consider the specialization $g\colon \BBZ[\bq, \ba]\to \BBC[\bv,
  \bv\inverse, \ba]$ with $g(\bq)= \bv^2$ and $g(\ba)= b\inverse \ba$. Then
  ${}_{g} \CH_{b,n} =Y_{b,n}$.
\end{lemma}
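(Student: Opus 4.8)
The plan is to put the two presentations side by side. Applying $g$ to the defining relations \cref{rel:1}--\cref{rel:6} of $\CH_{b,n}$ leaves \cref{rel:1}--\cref{rel:5} unchanged and turns \cref{rel:6} into
\[
R_i^2 = \bv^2 + b\inverse\ba\, \theta_i\, E_i R_i, \qquad \theta_i:= T_i^{(b^2-b)/2},\quad i\in[n-1].
\]
Under $T_j\leftrightarrow T_j$, $R_i\leftrightarrow G_i$, relations \cref{rel:1}, \cref{rel:2}, \cref{rel:3}, \cref{rel:4}, \cref{rel:5} are literally relations \cref{rel:1}, \cref{rel:2}, \cref{rel:g3}, \cref{rel:g4}, \cref{rel:g5}, so the only thing to reconcile is the quadratic relation: the specialized \cref{rel:6} carries the extra factor $\theta_i$ that is absent from \cref{rel:g6''}. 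Since $\theta_i^2 = T_i^{b(b-1)} = 1$, when $b$ is odd we have $\theta_i=1$ and the two presentations coincide on the nose with $G_i=R_i$; the real content is the even case.

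To handle all $b$ at once I would introduce the correcting element
\[
v_i = 1 + b\inverse E_i(\theta_i - 1) = (1-e_i) + e_i\theta_i \in \CD, \qquad e_i:= b\inverse E_i,
\]
and define algebra maps $\Phi\colon Y_{b,n}\to {}_g\CH_{b,n}$ and $\Psi\colon {}_g\CH_{b,n}\to Y_{b,n}$ by $T_j\mapsto T_j$ together with $\Phi(G_i)=v_iR_i$ and $\Psi(R_i)=v_iG_i$. The computation rests on three identities: $E_i^2=bE_i$ (so $e_i$ is idempotent), $E_iT_i=E_iT_{i+1}$ (equivalently $(T_iT_{i+1}\inverse-1)E_i=0$), and $\theta_i^2=1$. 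From them one reads off $v_i^2=1$, that $v_i$ commutes with $R_i$ (because conjugating $v_i$ by $R_i$ interchanges $T_i,T_{i+1}$, and $E_iT_i=E_iT_{i+1}$ forces the result to be $v_i$ again), and the key identity $E_iv_i=E_i\theta_i$ (using $E_ie_i=E_i$). The quadratic relation is then immediate: $v_i$ commutes with $R_i$ and $v_i^2=1$ give $G_i^2=v_iR_iv_iR_i=R_i^2$, while $\theta_iE_iR_i=E_iv_iR_i=E_iG_i$ rewrites the right-hand side of the specialized \cref{rel:6} as $\bv^2+b\inverse\ba\,E_iG_i$, which is \cref{rel:g6''}. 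Relations \cref{rel:g3} and \cref{rel:g4} hold because $v_i$ lies in the commutative algebra $\CD$ and involves only $T_i,T_{i+1}$.

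The step I expect to be the main obstacle is the braid relation \cref{rel:g5}, namely $G_iG_{i+1}G_i=G_{i+1}G_iG_{i+1}$. Writing $G_i=v_iR_i$ and pushing every $\CD$-factor to the left past the $R$'s (each $R_j$ conjugates a $\CD$-element by interchanging $T_j$ and $T_{j+1}$), both sides collapse to a single $\CD$-coefficient times $R_iR_{i+1}R_i=R_{i+1}R_iR_{i+1}$, the latter equality being \cref{rel:5}. Since $\CD$ is commutative, the two coefficients coincide as soon as the conjugate of $v_{i+1}$ by $R_i$ equals the conjugate of $v_i$ by $R_{i+1}$; and both of these are visibly the correcting element built from $T_i$ and $T_{i+2}$ with $\theta=T_i^{(b^2-b)/2}$. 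This compatibility of the corrections is the crux of the argument. Once it is established, $\Phi$ and $\Psi$ are well-defined algebra homomorphisms, and because $v_i^2=1$ they invert each other on generators, yielding the asserted identification ${}_g\CH_{b,n}=Y_{b,n}$.
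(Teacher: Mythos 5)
Your proposal is correct and takes essentially the same route as the paper: your correcting element $v_i=(1-b\inverse E_i)+b\inverse E_i\,T_i^{(b^2-b)/2}$ is exactly the paper's unit $u_i$ (written in closed form rather than as a sum over the weight idempotents $e_\chi$, equal to $1$ when $b$ is odd so that both parities are handled uniformly), and the twist $G_i\mapsto v_iR_i$ together with the identities $v_i^2=1$, $v_iR_i=R_iv_i$, and $E_iv_i=E_i\,T_i^{(b^2-b)/2}$ is precisely what the paper uses. The only difference is presentational: you phrase the comparison as a pair of mutually inverse homomorphisms and make the braid-relation check explicit, where the paper records it as a change of generators and leaves those verifications as ``straightforward.''
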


\begin{proof}
  The result is clear when $b$ is odd. 

  Suppose now that $b$ is even and for $i\in [n-1]$ define
  \[
  u_i= \sum_{\substack{\chi\in X(D) \\ s_i\cdot \chi =\chi}} T_i^{b/2}
  e_\chi + \sum_{\substack{\chi\in X(D) \\ s_i\cdot \chi \ne \chi}} e_\chi
  \qquad \text{and}\qquad \Rtilde_i = u_i R_i.
  \]
  Using the fact that
  \[
  \sum_{j=0}^{b-1} T_i^j T_{i+1}^{-j} e_\chi=
  \begin{cases}
    be_\chi& \text{if $s_i\cdot \chi =\chi$} \\
    0& \text{if $s_i\cdot \chi \ne\chi$}
  \end{cases}
  \]
  it is straightforward to check that $u_i^2=1$, $u_iR_i= R_iu_i$, and
  $T_i^{(b^2-b)/2} E_iu_i=E_i$, and hence that $\CH_{b,n}$ has generators
  \[
  \text{$T_1$, \dots, $T_n$, $\Rtilde_1$, \dots $\Rtilde_{n-1}$}
  \]
  and relations \cref{rel:1}, \cref{rel:2}, \cref{rel:g3}, \cref{rel:g4},
  \cref{rel:g5}, and the quadratic relation
  \begin{alignat}{2}
    &\Rtilde_{i}^2= \bq+ \ba E_i\Rtilde_i \qquad&& \text{for $i\in[n-1]$.}
    \tag{$\textrm {gr}_{6t}$} \label{rel:g6t}
  \end{alignat}
  It follows that ${}_{g} \CH_{b,n}= Y_{b,n}$ in this case as well.
\end{proof}

Chlouveraki and Poulain d'Andecy
\cite{chlouverakipoulaindandecy:representation} define one-parameter
Yokonuma-Hecke algebras that we denote by $Y_{b,n}'$. These are $\BBC[\bv,
\bv\inverse]$-algebras with generators
\[
\text{$T_1$, \dots, $T_n$, $G_1$, \dots $G_{n-1}$}
\]
and relations \cref{rel:1}, \cref{rel:2}, \cref{rel:g3}, \cref{rel:g4},
\cref{rel:g5}, and the quadratic relation
\begin{alignat}{2}
  &G_{i}^2= \bv^2+ (\bv^2-1) (b\inverse E_i)G_i \qquad&& \text{for $i\in
    [n-1]$.} \tag{$\textrm {gr}_6''$} \label{rel:g6'}
\end{alignat}
Setting $\Gtilde_i= \bv\inverse G_i$ for $i\in [n-1]$ gives the presentation
in \cite[2.1]{chlouverakipoulaindandecy:representation} with quadratic
relation
\[
\Gtilde_{i}^2= 1+ (\bv-\bv\inverse) (b\inverse E_i)\Gtilde_i.
\]
It follows from \cref{lem:spa} that if $g'\colon \BBZ[\bq, \ba]\to \BBC[\bv,
\bv\inverse]$ is the specialization with $g'(\bq)=\bv^2$ and
$g'(\ba)=b\inverse( \bv^2-1)$, then $Y_{b,n}'= {}_{g'}\CH_{b,n}$. If $q$ is
a prime power and $f'\colon \BBC[\bv, \bv\inverse] \to \BBC$ with
$f'(\bv)=\sqrt q$, then the quadratic relation \cref{rel:g6'} simplifies to
the relation \cref{rel:g6} and so ${}_{f'} Y_{q-1,n}' = {}_f \CH_{q-1,n}
\cong \CH_{\BBC}(G,U)$.

A family of $Y_{b,n}'$-modules indexed by $\bpart bn$ is constructed in
\cite[\S5.1]{chlouverakipoulaindandecy:representation} by defining an action
of the generators $T_j$ and $G_i$ on basis vectors and checking that the
defining relations of $Y_{b,n}'$ hold. For $\lambda\in \bpart bn$, denote
the module constructed in \cite[\S5.1]
{chlouverakipoulaindandecy:representation} by $\Vbar^\lambda$. With the
notation in \cite[\S5.1] {chlouverakipoulaindandecy:representation}, $\{\,
\bv_\CT\mid \CT\in \SYT^\lambda\,\}$ is a basis of $\Vbar^\lambda$. It is
straightforward to check that the obvious bijection between basis elements
between $\Vbar^\lambda$ and ${}_{g'} V^\lambda$, namely $\bv_{\CT}
\leftrightarrow 1\otimes v_\tau$, defines an isomorphism of $Y_{b,n}'
={}_{g'}\CH_{b,n}$-modules $\Vbar^\lambda \cong {}_{g'} V^\lambda$. Thus it
follows from \cref{thm:irrep} that each $\Vbar^\lambda$ is an induced
module.


\subsection*{Acknowledgments} The authors thank Nat Thiem and Lo\"ic Poulain
d'Andecy for very helpful communications.


\bibliographystyle{plain}

\end{document}